\newtheorem{lemma}{Lemma}[section]
\newtheorem{theorem}[lemma]{Theorem}
\newtheorem{corollary}[lemma]{Corollary}
\newtheorem{proposition}[lemma]{Proposition}
\newtheorem{conjecture}[lemma]{Conjecture}
\theoremstyle{definition}
\newtheorem{definition}[lemma]{Definition}
\newtheorem{remark}[lemma]{\sc Remark}
\newtheorem{example}[lemma]{\sc Example}
\newtheorem{notation}[lemma]{\sc Notation}
\newcommand{\nocontentsline}[3]{}
\newcommand{\tocless}[2]{\bgroup\let\addcontentsline=\nocontentsline#1{#2}\egroup}
\author[Emprin]{Coline Emprin}
\author[Hunter]{Dana Hunter}
\author[Livernet]{Muriel Livernet}
\author[Vespa]{Christine Vespa}
\author[Zakharevich]{Inna Zakharevich}
\title{A prop structure on partitions}
\date{\today}
\address{Coline Emprin, D\'epartement de math\'ematiques et applications, \'Ecole normale sup\'erieure, 45 rue d’Ulm, 75230 Paris Cedex 05, France}
\email{\href{mailto:coline.emprin@ens.psl.eu}{coline.emprin@ens.psl.eu}}
\address{Dana Hunter, Kalamazoo College, 1200 Academy Street, Kalamazoo, Michigan, 	49006-3295, USA}
\email{\href{mailto:Dana.Hunter@kzoo.edu}{Dana.Hunter@kzoo.edu}}
\address{Muriel Livernet, Univ. Paris Cit\'e, Institut de Math\'ematiques de Jussieu-Paris
	Rive Gauche, CNRS, SU, DMA, ENS-PSL,	Paris, France}
\email{\href{mailto:livernet@math.univ-paris-diderot.fr}{livernet@math.univ-paris-diderot.fr}}
\address{Christine Vespa, Aix Marseille Univ, CNRS, I2M, Marseille, France}
\email{\href{mailto:christine.vespa@univ-amu.fr}{christine.vespa@univ-amu.fr}}
\address{Inna Zakharevich, Math. Dept., Cornell University, Ithaca, NY, USA}
\email{\href{mailto:zakh@math.cornell.edu}{zakh@math.cornell.edu}}
\thanks{2024 \emph{Mathematics Subject Classification.} $\,\! $18M85, 18G15, 05A17}
\keywords{prop, partitions, Yoneda product, Karoubi envelope}
\begin{document}

\newcommand{\End}{\operatorname{End}}
\newcommand{\PROP}{\mathbf{PROP}}
\newcommand{\K}{\mathbb{K}} 
\newcommand{\Ss}{\mathbb{S}}  

\newcommand{\C}{\mathcal{C}}
\newcommand{\D}{\mathcal{D}}
\newcommand{\E}{\mathcal{E}}
\newcommand{\F}{\mathcal{F}}
\newcommand{\Operad}{\mathbf{Operad}}
\newcommand{\Co}{\mathcal{C}}
\renewcommand{\P}{\mathcal{P}}     
\newcommand{\SP}{s \mathcal{P}} 
\newcommand{\I}{\mathcal{I}}
\newcommand{\N}{\mathbb{N}}
\newcommand{\V}{\mathcal{V}} 
\newcommand{\Com}{\mathcal{C}om}
\newcommand{\SCom}{s\mathcal{C}om} 
\newcommand{\Ext}{\operatorname{Ext}}
\newcommand{\sgn}{\operatorname{sgn}}  
\newcommand{\Surj}{\mathrm{Surj}}
\newcommand{\Sh}{\mathrm{Sh}}
\newcommand{\Part}{\mathrm{Part}}
\newcommand{\Comp}{\mathrm{Comp}}
\newcommand{\Kar}{\mathrm{Kar}} 
\newcommand{\OSurj}{\mathrm{Surj}_{\mathsf{or}}} 
\newcommand{\Fgr}{{\mathcal{F}(\mathbf{gr})}}
\newcommand{\abK}{{\mathfrak{a}}_{\K}}

\newcommand{\tb}{\ast}

\newcommand{\proj}{\mathrm{proj}} 

\definecolor{bordeau}{rgb}{0.5,0,0}
\definecolor{BleuTresFonce}{rgb}{0.215, 0.215, 0.36}
\definecolor{BleuMinuit}{RGB}{0, 51, 102}
\definecolor{rose}{RGB}{235, 62, 124}

\newcommand{\Coline}[1]{\textcolor{rose}{#1}}
\newcommand{\Muriel}[1]{\textcolor{red}{#1}}
\newcommand{\Christine}[1]{\textcolor{cyan}{#1}}
\newcommand{\Dana}[1]{\textcolor{violet}{#1}}

\begin{abstract}
Motivated by its link with functor homology, we study the prop freely generated by the operadic suspension of the operad $Com$. We exhibit a particular family of generators, for which the composition and the symmetric group actions admit simple descriptions. We highlight associated subcategories of its Karoubi envelope which allows us to compute extensions groups between simple functors from free groups. We construct a particular prop structure on partitions whose composition corresponds to the Yoneda product of extensions between exterior power functors.
\end{abstract}

	\maketitle

\makeatletter
\def\@tocline#1#2#3#4#5#6#7{\relax
	\ifnum #1>\c@tocdepth 
	\else
	\par \addpenalty\@secpenalty\addvspace{#2}%
	\begingroup \hyphenpenalty\@M
	\@ifempty{#4}{%
		\@tempdima\csname r@tocindent\number#1\endcsname\relax
	}{%
		\@tempdima#4\relax
	}%
	\parindent\z@ \leftskip#3\relax \advance\leftskip\@tempdima\relax
	\rightskip\@pnumwidth plus4em \parfillskip-\@pnumwidth
	#5\leavevmode\hskip-\@tempdima
	\ifcase #1
	\or\or \hskip 1em \or \hskip 2em \else \hskip 3em \fi%
	#6\nobreak\relax
	\hfill\hbox to\@pnumwidth{\@tocpagenum{#7}}\par
	\nobreak
	\endgroup
	\fi}

\newcommand{\enableopenany}{%
	\@openrightfalse%
}
\makeatother
	
	\setcounter{tocdepth}{1}
	\tableofcontents

\section*{\textcolor{bordeau}{Introduction}}

The notions of props and operads arose in the work of Mac Lane \cite{ML65}, in the aim of encoding algebraic structures. While operads encode products with a single output, props allow for working with algebraic structures involving operations with multiple outputs. Such structures include Hopf algebras, Frobenius algebras, or Lie bialgebras, which arouse interest after the discovery of quantum groups for instance, see \cite{Dr1, Dr2}. A prop is a symmetric monoidal category with objects the natural numbers and whose symmetric monoidal structure $\otimes$ is given by the sum of integers on objects. The notions of operads and props are intrinsically related: one can consider the prop freely generated by a given operad and to any prop, one can associate its underlying operad. Working with set operads, it is well known that the prop freely generated by the terminal operad $\Com$ is the category of surjections $\Surj$, see e.g. \cite{HPV}.  \\

The present paper focuses on the graded linear prop $\mathcal{E}$ freely generated by the operadic suspension of the commutative operad. In this context, the suspension gives rise to signs. Let $\K$ denote the underlying ground field of characteristic zero. As a graded $\K$-vector space, the space $\E^{\bullet}(m,n)$ is concentrated in degree $m-n$, where we have an isomorphism \[\E^{m-n}(m,n) \simeq \K[\Surj(m,n)] \ .\] In \cite{KV23}, the authors exhibit a system of generators having the advantage that the monoidal structure simply corresponds to the disjoint union of sets. However, the composition involves signs. In Section \ref{1}, we provide another system of generators which has the advantage that the left and right actions of the symmetric groups are by sign, and the composition agrees with the set composition of surjections.  
Considering idempotents of the symmetric groups, one can construct various categories out of the prop $\mathcal{E}$, defined as subcategories of its Karoubi envelope. The general construction is recalled in Section \ref{2}. In particular, for any prop $\C$, one can consider a certain subcategory $\Lambda\C$ of the Karoubi envelope of $\C$. It is equivalent to a category $\C_\Lambda$ where the vector space of morphisms is obtained from $\C$ by taking the quotient by the symmetric group actions. Its composition is described in Theorem \ref{T:catquotient}. In the case of the prop $\mathcal{E}$, the space $\E^{\bullet}_{\Lambda}(m,n)$ is concentrated in degree $m-n$ and is spanned by partitions of $m$ into $n$ parts. The category structure, coming from that of $\mathcal{E}$, is described in Section \ref{prop-partition}. Nonetheless, the monoidal product of $\mathcal{E}$, does not induce any prop structure on $\E_{\Lambda}$. In Theorem \ref{T:main}, we introduce a different monoidal product which turns the category $\E_\Lambda$ into a prop and thus leads to a particular prop structure on partitions. \medskip

Our interest in these props comes from their link with extensions between functors from free groups which is explored in Section \ref{Ext}. In fact, functor homology turned out to be a useful tool for computing stable homology with twisted coefficients of various families of groups, and in particular of automorphism groups of free groups $\mathrm{Aut}(\mathbb{Z}^{*n})$, for $n\in \mathbb{N}$. Djament proved in \cite{Dja19} that stable cohomology of $\mathrm{Aut}(\mathbb{Z}^{*n})$ with coefficients given by a reduced polynomial covariant functor is governed by Ext-groups in the category $\Fgr$ of functors from finitely generated free groups to $\K$-vector spaces. From this perspective, \cite{vespa15} gives an explicit computation of the graded $\mathbb{K}$-vector spaces \[\Ext^\bullet_{\Fgr}((T^n\circ\mathfrak{a}) \otimes \K,(T^m\circ \mathfrak{a})\otimes \K)  \ ,\] where $\mathfrak{a}$ is the abelianization functor and $T^n$ is the $n$-th tensor power functor. Together with the Yoneda product and external product of extension, this family forms a prop which is shown to be isomorphic to the prop $\mathcal{E}$. This prop structure was leveraged to give explicit computations of stable homology of $\mathrm{Aut}(\mathbb{Z}^{*n})$ with coefficients given by particular contravariant functors, see \cite[Theorem~4]{vespa15}. The aformentionned results were extended in \cite{KV23, Dja19} in order to deal with bivariant coefficients. One can study extensions between other functors, by replacing $T^n$ by exterior power functors $\Lambda^n$ for example. By \cite{vespa15}, the extensions between these functors are concentrated in one degree and are spanned by partitions. A prop structure on these groups can thus be derived from that of $\mathcal{E}_{\Lambda}$. We conclude this paper by exploiting the construction of Section \ref{2} in order to give some other explicit computations of extension groups between simple functors.

 \bigskip 

\noindent \textbf{Notation.}

\begin{enumerate}
	\item Let $\K$ be a characteristic zero field and let $ \mathrm{gr Vect}_{\K}$ be the symmetric monoidal category of graded vector spaces over  $\K$.  We use the cohomological grading convention $V^\bullet$, and  the degree of an element $x$ in $V$  is denoted $\mathrm{d}(x)$. For every set $S$, we denote by $\K[S]$ the $\K$-vector space spanned by $S$. 
 	\item We denote by $|S|$ the cardinal of a finite set $S$. 
 
	\item The set of surjections from $\{1,\ldots,m\}$ to $\{1,\ldots,n\}$ is denoted $\Surj(m,n)$. 
  Composition of surjections  is denoted $\circ$.  Given $f\in\Surj(m,n)$ and $g\in\Surj(m',n')$ we denote by $f\times g$ the element in $
	\Surj(m+m',n+n')$ defined by
	\[ (f\times g)(i)=\begin{cases} f(i),& \text{ for } 1\leqslant  i\leqslant  m,\\
		g(i-m)+n,& \text{ for } m+1\leqslant  i\leqslant  m+m'.\end{cases}\]
	\item The symmetric group on $n$ letters is denoted $\mathbb{S}_n$. 
	\item $\tau_{i,i+1}$ denotes the transposition of $\mathbb{S}_n$ that permutes $i$ and $i+1$. $\epsilon (\sigma)$ denotes the sign of the permutation $\sigma$.
	\item A $(p,q)$-unshuffle is the inverse of a $(p,q)$-shuffle permutation, that is, a 
	permutation $\sigma$ such that $\sigma^{-1}(1)<\ldots< \sigma^{-1}(p)$ and $\sigma^{-1}(p+1)<\ldots<\sigma^{-1}(p+q)$.  Similarly we define a $(p_1,\ldots,p_n)$-unshuffle. For a surjection $f\in\Surj(m,n)$, we denote by $\Sh_f$ the set of $(p_1,\ldots,p_n)$-unshuffles with $p_i=|f^{-1}(i)|$.
	
		\item\label{N:dec} We denote by $\OSurj(m,n)$ the set of order-preserving surjections. For $f\in\Surj(m,n)$, there is a unique decomposition $f=s\circ \alpha$ with $s\in\OSurj(m,n)$ and $\alpha\in\Sh_f$.
		
    \item\label{N:part} A partition $\lambda$ of $m$ into $n$ parts is a sequence of positive integers $\lambda_1\geqslant \ldots\geqslant \lambda_n$ such that $\sum_i \lambda_i=m$. We denote by $\Part(m,n)$ the set of partitions of $m$ into $n$ parts. To a surjection $f\in\Surj(m,n)$, we can associate a partition of $m$ into $n$ parts given by ordering the cardinals of its fibers in the decreasing order. We denote by \[\proj:\Surj(m,n)\to\Part(m,n) \] this map. By linear extension we have a surjective morphism \[\proj:\K[\Surj(m,n)]\to\K[\Part(m,n)] \ .\]
    
\end{enumerate}

\bigskip 

\noindent \textbf{Acknowledgements.}
We would like to thank the Hausdorff Research Institute for Mathematics for hosting the Women in Topology IV workshop and for financial support. We would also like to thank the Foundation Compositio Mathematica, the Foundation Nagoya Mathematical Journal and the K-theory Foundation for financial support for this event.

\section{\textcolor{bordeau}{A graded linear prop spanned by surjections}}\label{1}

The aim of this section is to give an explicit description of the prop freely generated by the suspension of the commutative operad. We emphasize a choice of generators for which the symmetric group actions are given by the sign and which behave well with respect to the composition of maps.

\subsection{Recollections on props}

This section recalls the definition of a prop and the freely generated prop associated to an operad. We refer the reader to \cite{Mar08} for more details on props and to \cite{LV12} for more details on operads.

\begin{definition}[Graded linear prop]\label{D:prop}
A \emph{graded linear prop}, or simply prop, is a symmetric monoidal category $(\Co, \otimes, 1)$, enriched over the category of graded vector spaces $\mathrm{gr Vect}_{\K}$, with objects the natural numbers and whose symmetric monoidal structure $\otimes$ is given by the sum of integers on objects. In other words, a graded linear prop is the data of a collection $\{\Co(m,n) \}_{m,n \in \mathbb{N}}$ of graded $\K$-vector spaces
together with compatible morphisms:
\begin{itemize}
	\item[$\centerdot$] a vertical composition given by the categorical composition, \[\diamond:\; \Co(n,l) \otimes \Co(m,n) \longrightarrow \Co(m,l)\] 
	\item[$\centerdot$] an horizontal composition coming from the monoidal product, \[ \otimes:\; \Co(m,n)  \otimes \Co(m',n') \longrightarrow \Co(m + m', n +n') \] 
	\item[$\centerdot$] isomorphisms $s_{m,m'} \in \Co(m+m',m+m')$ such that we have \[(-1)^{\mathrm{d}(f) \mathrm{d}(g)} \left(g\otimes f \right) \diamond s_{m,m'} = s_{n,n'} \diamond \left(f\otimes g\right) , \] for all $f\in \Co(m,n)$ and all $g\in \Co(m',n')$. 
	\end{itemize}
Throughout the paper, we will use the terminology prop to refer to a graded linear prop.  
\end{definition}

\begin{remark}\label{phi}
	The isomorphisms $s_{m,m'}$ induce a morphism of $\mathbb K$-algebras $\varphi: \K[\mathbb{S}_n]\to\P(n,n)$. This gives a ${\K}[\mathbb{S}_m] $-right action and ${\K}[\mathbb{S}_n]$-left action on  $\P(m,n)$ defined for $\sigma\in\mathbb{S}_n, \tau \in \mathbb{S}_m,$ and $f \in\P(m,n)$ by\begin{equation}\label{E:action_comp}
	\sigma\cdot f \cdot\tau=\varphi(\sigma)\diamond f \diamond \varphi(\tau).
	\end{equation}
	In particular, one can find an equivalent definition in the literature of prop using only the action of the symmetric groups and its compatibility with vertical and horizontal compositions, see e.g. \cite{HR15}.
\end{remark}

\begin{remark}
For every prop $(\Co, \otimes, 1)$, the collection $\{\Co(n,1)\}_{n\in\mathbb N}$ forms an operad in $\mathrm{gr Vect}_{\K}$ where the composition maps are given for all $k,n_1, \dots, n_k \geqslant 0 $ by \[\Co(k,1)\otimes \Co(n_1,1) \otimes \ldots \otimes \Co(n_k,1) \to \Co(k,1)\otimes \Co(n_1+\ldots+n_k , k) \to \Co(n_1+\ldots+n_k,1) \ , \] where the first map is induced by the monoidal product $\otimes$ and the second map by the composition in the category $\Co$. We will refer to it as the \emph{underlying operad} of the prop $(\Co, \otimes, 1)$. This leads to a forgetful functor from the category of props to the one of operads. Its restriction to reduced props and operads admits a left ajoint $\Omega$ which associates to any reduced operad $\P$, the prop $\Omega \P$ freely generated by $\P$. 
\end{remark}

\begin{definition}[Freely generated prop]\label{omega P}
Let $\P$ be a reduced operad in $\mathrm{gr Vect}_{\K}$ (i.e. $\P(0)=0$). The graded linear prop $\Omega \P$ freely generated by $\P$ is given by the following data. 

\noindent $\centerdot$\emph{ The collection $\{\Omega \P(m,n)\}_{m,n \in \mathbb{N}}$ :}  It is defined by
		\begin{align*}
		\Omega\P(m,n)=&\bigoplus_{p_1+\ldots+p_n=m} \P(p_1)\otimes\ldots\otimes \P(p_n)
		\otimes_{\mathbb{S}_{p_1}\times\ldots\times\mathbb{S}_{p_n}}
		\K[\mathbb{S}_n] \\
		=&\bigoplus_{f\in\Surj(m,n)} \P(|f^{-1}(1)|)\otimes\ldots\otimes \P(|f^{-1}(n)|)\ . 
		\end{align*}

\noindent We denote by $\Omega \P_f$ the summand corresponding to a surjection $f \in \Surj(m,n)$ in the previous decomposition.

\noindent $\centerdot$\ \emph{The monoidal product} is obtained by concatenation, i.e. we consider \[x\otimes y\in \Omega\P_{f\times g} \ , \] for all $x \in \Omega\P_f$ and all $y \in \Omega\P_{g}$. \\ $\centerdot$\ \emph{The $\mathbb{S}_n$-left action} is defined for all $\sigma\in \mathbb{S}_n$, all $f\in \Surj(m,n)$ and all $x=x_1\otimes\ldots\otimes x_n\in\Omega\P_f$, where $x_i \in \P(|f^{-1}(i)|) $ by the formula
	\begin{equation}
\sigma\cdot x=\pm x_{\sigma^{-1}(1)}\otimes\ldots \otimes x_{\sigma^{-1}(n)} \in \Omega\P_{\sigma\circ f} \ ,
	\end{equation}
   where $\pm$ is the Koszul sign rule induced by the degrees of each element $x_i$. \\ $\centerdot$\ \emph{The $\mathbb{S}_m$-right action} is defined for $\tau \in \mathbb{S}_m$, $f\in \Surj(m,n)$ and $x=x_1\otimes\ldots\otimes x_n\in\Omega\P_f$ as follows. Decompose $f$  as $s\circ\alpha$ with $s\in\OSurj(m,n)$ and $\alpha\in\Sh_f$, and decompose
	$\alpha\circ\tau$ as $\alpha\circ\tau=(\sigma_1\times\ldots\times\sigma_n)\circ u$ with $\sigma_i\in\Ss_{|f^{-1}(i)|}$ and $u\in\Sh_f$.
	 Then, we have
  \begin{equation}
	(x_1\otimes\ldots\otimes x_n)\cdot \tau= \left( x_1\cdot \sigma_1 \right)\otimes\ldots\otimes \left(x_n\cdot \sigma_n \right) \in \Omega\P_{f\circ\tau}. 
 \end{equation}
	
		\noindent  $\centerdot$\ \emph{The composition morphisms} \[ \Omega\P(m,n)\otimes\Omega\P(l,m) \overset{\diamond}{\longrightarrow} \Omega\P(l,n) \] are described as follows. 	Let $s \in \OSurj(m,n)$ be an order-preserving surjection and let $g\in\Surj(l,m)$. Let us consider $x=x_1\otimes\ldots\otimes x_n \in \Omega \P_s$ and $y=y_1\otimes\ldots\otimes y_n \in\Omega\P_g$, with $y_i=y_i^1\otimes\ldots\otimes y_i^{p_i} $ where $p_i=|s^{-1}(i)|$. Then, we have 
	\begin{equation}\label{E:prop_comp}
	     x\diamond y= (-1)^{\omega} \gamma(x_1;y_1)\otimes\ldots \otimes\gamma(x_n;y_n) \in \Omega\P_{s\circ g}, 
	\end{equation}
  	where $\gamma$ denotes the composition in the operad $\P$ and $\omega$ is given by the Koszul sign rule, namely \[ \omega \coloneqq \sum_{i=1}^n \mathrm{d}(y_{i-1})\left(\mathrm{d}(x_{i})+\cdots + \mathrm{d}( x_n) \right).\] \\
    For $f\in\Surj(m,n)$, let us consider the decomposition $f=s\circ \alpha$ with $s\in \OSurj(m,n)$ and $\alpha\in\Sh_f$. Let us denote by $x_s \in \Omega\P_s $ the unique element satisfying $x_s \cdot \alpha=x \in \Omega\P_f$. Then, we have \[ x\diamond y=(x_s\cdot \alpha)\diamond y=x_s\diamond (\alpha\cdot y) \ ,\] which can be computed through the composition with an order-preserving surjection and the left action 
	of the symmetric group already defined.

\noindent $\centerdot$ The isomorphisms $s_{n,m}\in\Omega\P(n+m,n+m)$ are defined as $s_{n,m}={1}^{\otimes n+m}\in \Omega\P_\sigma$ 
where $\sigma$ is the unshuffle defined as $\sigma(i)=m+i$ for $1\leq i\leq n$ and $\sigma(i)=i-n$ for $n+1\leq i\leq n+m$.

\noindent $\centerdot$ The map $\varphi:\K[\Ss_n]\to \Omega\P(n,n)$ sends $\sigma$ to $1^{\otimes n}\in\Omega\P_\sigma$ where $1\in\P(1)$ denotes the unit of the operad $\P$.
\end{definition}

\subsection{The prop freely generated by the operadic suspension of $\Com$} \label{E}
Let us consider  $\SCom$ the operadic suspension of the commutative operad, see \cite[Section~7.2.2]{LV12} for more details. The graded operad $\SCom$ is generated by a single operation $\mu_2$, of degree $1$ and arity $2$, subject to the relation \[\mu_2\circ_1 \mu_2=-\mu_2\circ_2 \mu_2 \ ,\] and the action of $\mathbb{S}_2$ on $\mu_2$ is by sign. By using the conventions of \cite[Section~9]{KV23}, we have that $\SCom(n)=\mathbb K \mu_n\otimes\sgn_n$ is concentrated in degree $n-1$, and \[\mu_3 = \mu_2\circ_1 \mu_2= -\mu_2\circ_2 \mu_2 \ , \] 
so that

\[\mu_n\circ_i \mu_k=(-1)^{(i-1)(k-1)} \mu_{n+k-1} \ , \] where $\mu_1 = 1$ is the unit of the operad $sCom$. The operadic composition in $\SCom$ is then given by 
	
 \begin{equation*}
 \mu_n(\mu_{p_1},\ldots,\mu_{p_n})=(-1)^{\kappa(p_1,\ldots,p_n)} \mu_{p_1+\ldots+p_n} 
 \end{equation*}
 where 
\begin{equation}\label{E:kappa}
		\kappa(p_1,\ldots,p_n)=\sum_{j=1}^n (p_j-1)(p_1+\ldots +p_{j-1})=\sum_{k=1}^n p_k(p_{k+1}-1+\ldots+p_n-1) \ .
\end{equation}

\begin{notation}\label{N:E}
    Let us denote by $\E$ the graded linear prop $\Omega\SCom$ freely generated by the operad $\SCom$.
\end{notation}
\noindent By Definition \ref{omega P}, for $f\in\Surj(m,n)$, we have 
$$\E_f^\bullet=  
\left\{
\begin{array}{ll}
	\SCom(|f^{-1}(1)|)\otimes\ldots\otimes \SCom(|f^{-1}(n)|)  \simeq  \K & \mbox{if } \bullet = m-n  \text{ and } m\geqslant  n,\\
	0 &  \mbox{otherwise }  
	\end{array}
\right. 
$$
 as a graded vector space. In order to give an explicit description of this prop using Definition \ref{omega P} we need to choose, for any $f\in\Surj(m,n)$, a generator in the vector space $\E_f^{m-n}$. 
  In \cite{KV23}, the authors consider the generators $\mu_f:=\mu_{p_1}\otimes\ldots\otimes \mu_{p_n}$ of $\E_f^{m-n}$ for $f\in\Surj(m,n)$ and $p_i=|f^{-1}(i)|$. For these generators the composition $\diamond$  and the action of the symmetric groups give rise to complicated signs whereas the monoidal structure is simply given by concatenation without signs. More precisely, the left action by $\tau_{i,i+1}$ is given by
 \begin{equation}\label{E:prop_leftaction}
 \tau_{i,i+1}\cdot \mu_f=(-1)^{(p_i-1)(p_{i+1}-1)}\mu_{\tau\circ f}\ .
 \end{equation}
 For the right action, following Definition \ref{omega P}, we decompose $f$ as $f=s\circ\alpha$, with $s\in\OSurj(m,n)$ and $\alpha\in\Sh_f$. Given $\tau\in\Ss_m$, there exist $\sigma=(\sigma_1\times\ldots\times\sigma_n), \sigma_i\in\Ss_{p_i}$ and $u\in \Sh_f$ such that
 $\alpha\circ\tau=\sigma\circ u$. The right action on $\Omega\SCom$ is given by sign, so that
 \begin{equation}\label{E:prop_rightaction}
     \mu_f\cdot\tau=\epsilon(\sigma)\mu_{f\circ\tau} \ .
 \end{equation}

 We give below an alternative choice of generators in $\E_f^{m-n}$ having the advantage that the description of the composition $\diamond$ is easy and that the action of the symmetric groups on these generators on both sides is given by sign.

\begin{notation} \label{nu}
Let $f\in\Surj(m,n)$ be a surjection such that $p_i=|f^{-1}(i)|$ and let $\kappa(f)$ be the integer 
$\kappa(p_1,\ldots,p_n)$ defined in (\ref{E:kappa}). Let us denote \[ \nu_f \coloneqq \epsilon(\alpha) (-1)^{\kappa(f)} \mu_{p_1} \otimes \cdots \otimes \mu_{p_n}=\epsilon(\alpha) (-1)^{\kappa(f)}\mu_f \ , \] where $s \circ \alpha$ is the decomposition of $f$ with $s\in\OSurj(m,n)$ and $\alpha\in\Sh_f$. Let us note that $\nu_\sigma =\epsilon(\sigma) \mu_\sigma$ for all $\sigma \in \mathbb{S}_m$, or equivalently that $\varphi(\sigma)=\epsilon(\sigma)\nu_\sigma$. The degree is given by $\mathrm{d}(\nu_f)=m-n$ and \[\E_f^{m-n} \simeq \mathbb K \nu_f \ .\] 
\end{notation}


\noindent The following theorem describes the prop structure on $\E^\bullet$ in terms of the generators $\nu_f$. 

\begin{theorem}\label{P:easyprop}
Let $\E$ be the graded linear prop $\Omega\SCom$.

\begin{enumerate}
    \item[(a)]  The $\mathbb{S}_n$-left action and the $\mathbb{S}_m$-right action are given by \[\sigma\cdot \nu_f\cdot \tau =\epsilon(\sigma)\epsilon(\tau) \nu_{\sigma\circ f\circ\tau} \ ,\] for all $f\in \Surj(m,n)$, all $\sigma\in\mathbb{S}_n$ and all $\tau\in\mathbb{S}_m$.

\item[(b)] Let $s \in \OSurj(m,n)$ and $t \in\OSurj(m',n')$ be order-preserving surjections. The monoidal product is given by the concatenation up to a sign, i.e.  
\[\nu_s \otimes \nu_{t}=(-1)^{\mathrm{d}(\nu_t)m} \nu_{s\times t} \ .\] 
If $f$ and $g$ are not order-preserving surjections, the formula for $\nu_f \otimes \nu_g$ can be derived by decomposing 
$f$ and $g$ into order-preserving surjections and unshuffles.
    \item[(c)] The composition is defined for $f\in\Surj(m,n)$ and $h\in\Surj(l,m)$ by \[\nu_f\diamond \nu_h=\nu_{f\circ h} \ . \] 
\end{enumerate}

\end{theorem}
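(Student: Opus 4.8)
The plan is to establish the three parts in the order (a), (c), (b), transporting the known but sign-heavy formulas \eqref{E:prop_leftaction}, \eqref{E:prop_rightaction} and \eqref{E:prop_comp} for the generators $\mu_f$ to the generators $\nu_f$ by means of the dictionary $\nu_f=\epsilon(\alpha)(-1)^{\kappa(f)}\mu_f$ from Notation~\ref{nu}, where $f=s\circ\alpha$. Since the actions are algebra actions (Remark~\ref{phi}) and both $f\mapsto\sigma\circ f\circ\tau$ and $\epsilon$ are multiplicative, it suffices to verify (a) on the adjacent transpositions that generate $\mathbb{S}_n$ and $\mathbb{S}_m$, and then extend multiplicatively. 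The entire theorem then comes down to two elementary sign identities, one for the left action and one for the order-preserving composition.

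For the left action, write $f'=\tau_{i,i+1}\circ f$ with unshuffle $\alpha'$, apply \eqref{E:prop_leftaction}, and re-express $\mu_{f'}$ through $\nu_{f'}$; the claim $\tau_{i,i+1}\cdot\nu_f=-\nu_{f'}$ becomes the identity $\epsilon(\alpha)\epsilon(\alpha')(-1)^{\kappa(f)+\kappa(f')+(p_i-1)(p_{i+1}-1)}=-1$. I would compute its two ingredients separately: expanding \eqref{E:kappa} shows that only the two swapped slots contribute, giving $\kappa(f')-\kappa(f)=p_i-p_{i+1}$; and $\alpha'$ is obtained from $\alpha$ by interchanging the two adjacent blocks $f^{-1}(i)$ and $f^{-1}(i+1)$ in the one-line word of $\alpha^{-1}$, so $\epsilon(\alpha)\epsilon(\alpha')=(-1)^{p_ip_{i+1}}$. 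Feeding both into the exponent collapses it to an odd integer, which is the desired sign. The right action is cleaner and can be done for an arbitrary $\tau\in\mathbb{S}_m$ at once: in \eqref{E:prop_rightaction} the block permutation $\sigma=\sigma_1\times\cdots\times\sigma_n$ preserves each fiber $s^{-1}(i)$, so $s\circ\sigma=s$ and hence $f\circ\tau=s\circ u$ identifies $u$ as the unshuffle of $f\circ\tau$; the relation $\alpha\circ\tau=\sigma\circ u$ then gives $\epsilon(\sigma)\epsilon(u)=\epsilon(\alpha)\epsilon(\tau)$, which makes all shuffle signs cancel in the transported formula and leaves precisely $\epsilon(\tau)$.

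Granting (a), I would reduce (c) to the case of two order-preserving surjections. Using $\nu_g=\epsilon(\gamma)\,\nu_{g_0}\diamond\varphi(\gamma)$ for the decomposition $g=g_0\circ\gamma$ (a consequence of (a) and \eqref{E:action_comp}) together with associativity of $\diamond$, one first peels the unshuffle of $f$ off and absorbs it into $h$, obtaining $\nu_f\diamond\nu_h=\nu_s\diamond\nu_{\alpha\circ h}$; one then peels the unshuffle of the new right factor off to the far right, where the two shuffle signs cancel against each other by the right action, reducing everything to $\nu_s\diamond\nu_t=\nu_{s\circ t}$ for $s,t$ order-preserving. In that case $s\circ t$ is again order-preserving, the fibers of $t$ are grouped by $s$ in their natural order, and \eqref{E:prop_comp} expresses $\mu_s\diamond\mu_t$ directly through $\mu_{s\circ t}$ with no internal reordering; the claim then reduces to the single identity $\kappa(s)+\kappa(t)+\omega+\sum_{i}\kappa(\text{block }i)\equiv\kappa(s\circ t)\pmod 2$, the blocks being the fibers of $t$ grouped according to $s$. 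Expanding all four terms through \eqref{E:kappa} and the formula for $\omega$, everything telescopes and the identity holds exactly.

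Finally, (b) follows the same template: by (a) it is enough to treat $s,t$ order-preserving, where $s\times t$ is order-preserving and the monoidal product of the $\mu$'s is concatenation without any sign, so the statement reduces to the elementary count $\kappa(s\times t)=\kappa(s)+\kappa(t)+(m'-n')m$ obtained from \eqref{E:kappa}; this produces exactly the prefactor $(-1)^{\mathrm{d}(\nu_t)m}$. The main obstacle throughout is the sign bookkeeping rather than any structural difficulty: the two delicate points are the identification of $\alpha'$ with the block-swapped unshuffle (the source of $(-1)^{p_ip_{i+1}}$) in the left action, and the verification that the Koszul sign $\omega$, the operadic signs $\kappa$ of the blocks, and $\kappa(s)$ telescope to $\kappa(s\circ t)$; once these are in hand, everything else is formal manipulation with associativity and the dictionary relating $\varphi$, the symmetric group actions, and the $\nu_f$.
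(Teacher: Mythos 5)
Your proposal is correct, and for parts (a) and (b) it follows the paper's argument essentially verbatim: transport the $\mu_f$-formulas to the $\nu_f$ via the dictionary $\nu_f=\epsilon(\alpha)(-1)^{\kappa(f)}\mu_f$, treat the right action for arbitrary $\tau$ using $\alpha\circ\tau=\sigma\circ u$ and the resulting sign cancellation, and reduce (b) to the identity $\kappa(s\times t)=\kappa(s)+\kappa(t)+(m'-n')m$. (The paper handles the left action slightly differently — it first proves a closed formula $\epsilon(\sigma)\sigma\cdot\mu_s=\epsilon(\beta)(-1)^{\sum_i p_i(\sigma(i)-i)}\mu_{\sigma\circ s}$ for order-preserving $s$ and arbitrary $\sigma$, then extends to general $f$ via the right action, whereas you check adjacent transpositions against a general $f$ directly; both reduce to the same two facts, $\epsilon(\alpha)\epsilon(\alpha')=(-1)^{p_ip_{i+1}}$ and $\kappa(f')-\kappa(f)\equiv p_i-p_{i+1}$, so this is cosmetic.) The one genuine divergence is in (c): after the identical reduction $\nu_f\diamond\nu_h=\nu_s\diamond\nu_{\alpha\circ h}=\epsilon(\beta)(\nu_s\diamond\nu_t)\cdot\beta$ to order-preserving $s,t$, the paper proceeds by induction on $n$, splitting $s=s_1\times s_2$ and $t=t_1\times t_2$ and invoking the monoidal formula of (b), while you expand the composition \eqref{E:prop_comp} directly and verify the congruence $\kappa(s)+\kappa(t)+\omega+\sum_i\kappa(\text{block }i)\equiv\kappa(s\circ t)\pmod 2$. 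Your identity does hold — writing $P_i=p_1+\cdots+p_i$, $Q_j=q_1+\cdots+q_j$, both $\omega$ and $\sum_i(p_i-1)(Q_{P_{i-1}}-P_{i-1})$ equal $\sum_k(p_k-1)\sum_{j<k}(r_j-p_j)$ with $r_j=|(s\circ t)^{-1}(j)|$, so the telescoping you assert goes through — but you should spell this computation out, since it is the entire content of the base case. The trade-off: the paper's induction outsources the sign bookkeeping to (b) and to the associativity of $\otimes$ and $\diamond$, at the cost of making (c) depend on (b); your route is self-contained and makes (c) independent of (b), at the cost of one explicit multi-term congruence.
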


\begin{proof} 
Let $f \in \Surj(m,n)$ be a surjection which decomposes uniquely as $s \circ\alpha$ with $s\in\OSurj(m,n)$ and $\alpha\in\Sh_f$. Let us denote $p_i= |f^{-1}(i)| =|s^{-1}(i)|$. We have $\kappa(f)=\kappa(s)$. 

\noindent Let us prove Point (a). The $\mathbb{S}_m$-right action is given for $\tau \in \mathbb{S}_m$ as follows. Let us write $\alpha\circ\tau=\sigma\circ u$ with $\sigma=(\sigma_1\times\ldots\times\sigma_n), \sigma_i\in\Ss_{p_i}$ and $u\in \Sh_f$. By definition of $\mu_f$ and Relation (\ref{E:prop_rightaction}), we have
		\[\nu_f\cdot\tau=
  (-1)^{\kappa(f)}\epsilon(\alpha)\epsilon(\sigma)\mu_{f\circ\tau}\ .\] Since $s\circ\sigma=s$, we have $f\circ\tau= s\circ u$ and \[\mu_{f\circ\tau}=\epsilon(u)(-1)^{\kappa(f)}\nu_{f\circ\tau} \ .\] This leads to $\nu_f\cdot\tau=\epsilon(\tau) \nu_{f\circ\tau} \ . $

\noindent The $\mathbb{S}_n$-left action is given for $\sigma \in \mathbb{S}_n$ as follows. Let $s \in\OSurj(m,n)$ be an order preserving surjection such that $p_i=|s^{-1}(i)|$ and let us write $\sigma\circ s=t\circ \beta$ with $t\in\OSurj(m,n)$ and $\beta\in\Sh_{\sigma\circ s}.$ First, we claim that \[
    \epsilon(\sigma)\sigma\cdot\mu_s=\epsilon(\beta)(-1)^{\sum_{i=1}^np_i(\sigma(i)-i)}\mu_{\sigma\circ s} \]
Since this formula is clearly multiplicative, it is enough to prove it for $\sigma=\tau_{i,i+1}$. By Formula (\ref{E:prop_leftaction}), we have
\[\tau_{i,i+1}\cdot\mu_s=(-1)^{(p_i-1)(p_{i+1}-1)}\mu_{\tau_{i,i+1}\circ s} \ , \] and $\epsilon(\beta)=(-1)^{p_ip_{i+1}}$, since $\beta$ exchanges the fibers $i$ and $i+1$, which gives the desired formula. It leads to \[\sigma\cdot\nu_s=(-1)^{\kappa(s)}\sigma\cdot\mu_s=(-1)^{\kappa(s)}\epsilon(\sigma)\epsilon(\beta)(-1)^{\sum_{i=1}^np_i(\sigma(i)-i)}\mu_{\sigma\circ s} \ . \] One can prove that  \[(-1)^{\kappa(\sigma\circ s)}=(-1)^{\kappa(s)}(-1)^{\sum_{i=1}^np_i(\sigma(i)-i)} \ , \] which gives \[ \sigma\cdot\nu_s= (-1)^{\kappa(\sigma\circ s)}\epsilon(\sigma)\epsilon(\beta)\mu_{\sigma\circ s}=\epsilon(\sigma)\nu_{\sigma\circ s} \ . \] We get the result for any surjection $f \in \Surj(m,n)$ using its decomposition $f = s \circ \alpha$ and the formula of the right action. 

\noindent Let us prove Point (b). Let $s\in\OSurj(m,n)$ and $t\in\OSurj(m',n')$ be order-preserving surjections. It is sufficient to prove that
\[\kappa(s\times t)= \kappa(s)+\kappa(t)+ (m'-n') m \ .\] We have
$\kappa(s\times t)=\kappa(s)+\kappa(t)+\sum_{i=1}^{n'} (q_i-1)m$, for $q_i=|t^{-1}(i)|$, which gives the desired result.

\noindent Let us prove Point (c). For any $h\in\Surj(l,m)$, the surjection $\alpha \circ h$ decomposes as $t \circ \beta$ with $t$ an order-preserving surjection and $\beta$ an unshuffle. We have 
\[\nu_f \diamond \nu_h =\epsilon(\alpha) (\nu_s\cdot\alpha)\diamond \nu_h=\nu_s\diamond \nu_{\alpha\circ h}=\epsilon(\beta)(\nu_{s}\diamond\nu_t)\cdot\beta . \] Hence, it is sufficient to prove the result for order-preserving surjections $s\in\OSurj(m,n)$ and $t\in\OSurj(l,m)$. Let us denote \[
m_1=|s^{-1}(1)|, \quad m_2=m-m_1, \quad l_1=\sum_{i=1}^{m_1}|t^{-1}(i)|  \quad \mbox{and} \quad l_2=l-l_1 \ .\]
We proceed by induction on $n$. For $n=1$, it is a consequence of the operadic composition in $\SCom$ and the definition of $\kappa$. We decompose $s=s_1\times s_2$ with 
$s_1\in\OSurj(m_1,1)$, $s_2\in\OSurj(m_2,n-1)$ and $t=t_1\times t_2$ with $t_1\in\OSurj(l_1,m_1)$ and $t_2\in\OSurj(l_2,m_2)$. We have  \begin{align*}
    \nu_s\diamond \nu_t & =\nu_{s_1\times s_2}\diamond \nu_{t_1\times t_2} \\ & =(-1)^{\mathrm{d}(\nu_{s_2})m_1+ \mathrm{d}(\nu_{t_2})l_1}(\nu_{s_1}\otimes\nu_{s_2})\diamond(\nu_{t_1}\otimes\nu_{t_2}) \\ & 
=(-1)^{ \left( \mathrm{d}(\nu_{s_2}) + \mathrm{d} (\nu_{t_2}) \right) l_1} \nu_{s_1\circ t_1}\otimes\nu_{s_2\circ t_2} \\ 
 &  =  \nu_{(s_1\circ t_1)\times (s_2\circ t_2)} \\
& = \nu_{s\circ t}\ .\qedhere
\end{align*}

	 \end{proof}

\section{\textcolor{bordeau}{On subcategories of the Karoubi envelope of a prop}}\label{2}

In this section, we recall the construction of the Karoubi envelope of a category (also called idempotent completion or pseudo-abelian hull), see \cite[Section~1.2]{Kar68}. This construction aims to add objects and morphisms to a category so that every idempotent, i.e. every endomorphism $e_A: A \to A$ satisfying $e_A \diamond e_A =e_A$ , is split.

\begin{definition}[Karoubi envelope] \label{Karoubi-env}
The \emph{Karoubi envelope} of a given category $\C$ is the category $\Kar(\C)$ defined by the following data. 
\begin{itemize}
	\item[$\centerdot$] The objects are the pairs $(A, e_A)$ where $A$ is an object in $\C$ and $e_A: A \to A$ is an \emph{idempotent}.
	\item[$\centerdot$] A morphism $f : (A,e_A) \to (B,e_B)$ between two objects in $\Kar(\C)$ is a morphism $f: A \to B$ in $\C$ such that $f= f \diamond e_A = e_B \diamond f= e_B \diamond f\diamond e_A\ .$
	\item[$\centerdot$] The composition is the same as the one in $\C$ and the identity morphism on $(A,e_A)$ is $e_A$.
\end{itemize} 
There is a fully faithful functor  $\eta_\C: \C \to \Kar(\C)$ defined as follows. For an object $A$ in $\C$, we have that $\eta_\C(A)=(A, 1_A)$ and for a morphism $f$ in $\C$, we have that $\eta_\C(f)= f$. 
\end{definition}


\begin{remark}\label{R:inKaroubi}  A morphism $f: A \to B$ in a category $\C$ is a morphism in $\Kar(\C)$ from $(A,e_A)$ to $(B,e_B)$ if and only if $f= e_B \diamond f \diamond e_A$.
\end{remark}

\begin{proposition} \label{prop on Karoubi}
Suppose that $(\C,\otimes,I)$ is a symmetric monoidal structure on $\C$.  Then $(\Kar(\C),\tilde\otimes, (I,1_I))$ is a symmetric monoidal category with
\begin{align*}
    (A,e_A)\ \tilde\otimes \ (B,e_B) &=(A\otimes B,e_A\otimes e_B) \\
    f\ \tilde\otimes \ g&= f\otimes g \ .
    \end{align*}
\end{proposition}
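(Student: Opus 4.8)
The plan is to transport the symmetric monoidal structure of $\C$ to $\Kar(\C)$ by decorating every structure morphism of $\C$ with the relevant idempotents. The only property of $\C$ I will use repeatedly is the bifunctoriality of $\otimes$, i.e. the interchange law $(f\otimes g)\diamond(f'\otimes g')=(f\diamond f')\otimes(g\diamond g')$; in the graded setting this identity carries Koszul signs which are inherited from $\C$, and which are moreover trivial in the well-definedness checks below, since the maps one conjugates by are the idempotents $e_A$ (these have degree zero, as $e_A\diamond e_A=e_A$ forces $\mathrm{d}(e_A)=0$).

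First I would check that $\tilde\otimes$ is a well-defined bifunctor $\Kar(\C)\times\Kar(\C)\to\Kar(\C)$. On objects, $e_A\otimes e_B$ is idempotent since $(e_A\otimes e_B)\diamond(e_A\otimes e_B)=(e_A\diamond e_A)\otimes(e_B\diamond e_B)=e_A\otimes e_B$, so $(A\otimes B,e_A\otimes e_B)$ is an object of $\Kar(\C)$. On morphisms $f\colon(A,e_A)\to(B,e_B)$ and $g\colon(C,e_C)\to(D,e_D)$, Remark \ref{R:inKaroubi} gives $f=e_B\diamond f\diamond e_A$ and $g=e_D\diamond g\diamond e_C$, hence
\[ f\otimes g=(e_B\diamond f\diamond e_A)\otimes(e_D\diamond g\diamond e_C)=(e_B\otimes e_D)\diamond(f\otimes g)\diamond(e_A\otimes e_C), \]
so $f\tilde\otimes g$ is a morphism $(A,e_A)\tilde\otimes(C,e_C)\to(B,e_B)\tilde\otimes(D,e_D)$ again by Remark \ref{R:inKaroubi}. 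Functoriality is then immediate from interchange: the identity $e_A\otimes e_B$ of $(A,e_A)\tilde\otimes(B,e_B)$ (Definition \ref{Karoubi-env}) equals $e_A\tilde\otimes e_B$, and $(f'\tilde\otimes g')\diamond(f\tilde\otimes g)=(f'\diamond f)\tilde\otimes(g'\diamond g)$.

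Next I would build the structure isomorphisms by restricting those of $\C$. Writing $a,\ell,r,c$ for the associator, unitors, and symmetry of $\C$, I set
\[ \tilde a:=a_{A,B,C}\diamond\big((e_A\otimes e_B)\otimes e_C\big), \quad \tilde\ell:=\ell_A\diamond(1_I\otimes e_A), \quad \tilde r:=r_A\diamond(e_A\otimes 1_I), \quad \tilde c:=c_{A,B}\diamond(e_A\otimes e_B). \]
Naturality of $a$ applied to the idempotents gives $a_{A,B,C}\diamond((e_A\otimes e_B)\otimes e_C)=(e_A\otimes(e_B\otimes e_C))\diamond a_{A,B,C}$, which by Remark \ref{R:inKaroubi} exhibits $\tilde a$ as a morphism between the required objects of $\Kar(\C)$; the same holds for $\tilde\ell,\tilde r,\tilde c$. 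Each is invertible in $\Kar(\C)$: taking $\tilde a^{-1}:=a^{-1}_{A,B,C}\diamond(e_A\otimes(e_B\otimes e_C))$ and using naturality once more, a short computation gives $\tilde a^{-1}\diamond\tilde a=(e_A\otimes e_B)\otimes e_C$ and $\tilde a\diamond\tilde a^{-1}=e_A\otimes(e_B\otimes e_C)$, which are precisely the identities of its source and target; similarly for $\tilde\ell,\tilde r,\tilde c$.

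Finally I would check naturality and coherence. Naturality of $\tilde a,\tilde\ell,\tilde r,\tilde c$ follows by pre- and post-composing the naturality squares of $\C$ with the ambient idempotents. For the pentagon, triangle, hexagon, and the involutivity $\tilde c\diamond\tilde c=\mathrm{id}$, I would reduce to the axioms of $\C$: the functor $\eta_\C\colon\C\to\Kar(\C)$ is strict monoidal on objects and morphisms (since $\eta_\C(A)\tilde\otimes\eta_\C(B)=(A\otimes B,1_{A\otimes B})=\eta_\C(A\otimes B)$), and on objects of the form $(A,1_A)$ the maps $\tilde a,\tilde\ell,\tilde r,\tilde c$ are exactly $\eta_\C$ applied to $a,\ell,r,c$. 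Thus each coherence diagram for identity idempotents is the image under $\eta_\C$ of the corresponding diagram in $\C$ and commutes; for arbitrary idempotents, every path around a coherence diagram is obtained from the identity-idempotent case by sliding the idempotents through the structure maps (by naturality) and absorbing repeats (by idempotency), so commutativity persists. I expect this last reduction to be the only real point of care: one must confirm that decorating an entire coherence diagram by a tensor of idempotents preserves its commutativity, which it does because no relations beyond those already holding in $\C$ are invoked.
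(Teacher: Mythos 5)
Your proposal is correct and follows essentially the same route as the paper: check that $e_A\otimes e_B$ is idempotent and that $f\otimes g$ is a morphism in $\Kar(\C)$ via the interchange law, then define the structure isomorphisms by decorating those of $\C$ with the ambient idempotents and use naturality to verify they are morphisms, isomorphisms, and satisfy coherence. You simply spell out the invertibility and coherence checks that the paper leaves as assertions.
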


\begin{proof} We observe that $e_A\otimes e_B$ is an idempotent. For $f:(A,e_A)\to (A',e_{A'})$ and $g:(B,e_{B})\to (B',e_{B'})$ two morphisms in $\Kar(\C)$, we have that $f\ \tilde\otimes \ g$ is a morphism from $(A,e_A)\ \tilde\otimes \ (B,e_B)$ to 
$(A',e_{A'})\ \tilde\otimes \ (B',e_{B'})$ since 
\[(e_{A'}\otimes e_{B'})\diamond (f\otimes g)\diamond (e_A\otimes e_B)=(e_{A'}\diamond f\diamond e_A)\otimes (e_{B'}\diamond g\diamond e_B)=f\otimes g \ .\]
If $\rho$ is the left unitor in $\C$, that is, a natural isomorphism defined  by $\rho_A:A\otimes I\to A$, then by naturality one has $e_A\diamond\rho_A=\rho_A\diamond(e_A\otimes 1_I)$. Hence 
$\tilde\rho_{(A,e_A)}:=e_A\diamond\rho_A\diamond (e_A\otimes 1_I)$ is a 
well defined morphism in $\Kar(\C)$ from $(A,e_A)\tilde\otimes (I,1_I)$ to $(A,e_A)$. It is natural and it is an isomorphism. The associator, the right unitor and the swap map are defined similarly, and  the desired diagrams between the natural transformations (the commutative pentagon, hexagon, and triangle) commute.
\end{proof}



Let $\C$ and $\D$ be two categories. A \emph{semifunctor} $F:\C \to \D$ maps objects (resp. arrows) in $\C$ to objects (resp. arrows) in $\D$, preserving domain, codomain and composition, see \cite[Section~4]{Mitchell72}. The difference between semifunctors and functors is that semifunctors need not preserve identities.

\begin{definition}
There is a forgetful semifunctor $\epsilon_\C: \Kar(\C) \to \C$ defined as follows. For $(A,e_A)$ an object of $\Kar(\C)$, we have that $\epsilon_\C(A,e_A)=A$ and any morphism is sent to itself. 
In particular, the identity $e_A$ of $(A,e_A)$ is sent to $e_A$ which is not equal to $1_A$ in general.
\end{definition}

Let $(\Co, \otimes, 1)$ be a prop. By Remark \ref{phi}, the morphism $\varphi: {\K}[\mathbb{S}_n]\to\Co(n,n)$, maps any idempotent $e$ in ${\K}[\mathbb{S}_n]$ to an idempotent in $\Co(n,n)$. Via an abuse of notation, we will denote the idempotents $\varphi(e)$ by $e$, so that by relation 
(\ref{E:action_comp}), the action $\cdot$ of the symmetric group on $\C$ corresponds to the composition $\diamond$ in the prop $\C$.

\noindent There is a well-known construction of a set of primitive orthogonal idempotents of  ${\K}[\mathbb{S}_n]$ indexed by the partitions of $n$ (see \cite[Section 4.1]{FultonHarris}). For $\lambda$ a partition of $n$, we denote by $e_\lambda$ the associated idempotent. For example, we have:\[e_{(1^n)} \coloneqq \frac{1}{n!}\sum_{\sigma\in\mathbb{S}_n}\epsilon(\sigma)\sigma \quad  \text{ and } \quad   e_{(n)}  \coloneqq \frac{1}{n!}\sum_{\sigma\in\mathbb{S}_n}\sigma \ \] In this section, we are interested in particular subcategories of $\Kar(\Co)$ built from the idempotents $e_{(1^n)}$.



\begin{definition} \label{Lambda-C}
Let $(\Co, \otimes, 1)$ be a prop. We denote by $\Lambda \Co$ the full subcategory of $\Kar(\Co)$ whose objects are given by $(n, e_{(1^n)})$ for all $n \in \mathbb{N}$. 
\end{definition}

\noindent By Definition \ref{Karoubi-env} and Remark \ref{R:inKaroubi}, an element of $\Lambda\Co((m, e_{(1^m)}),(n, e_{(1^n)}))$ is necessarily of the form 
	\[e_{(1^n)}\diamond f\diamond e_{(1^m)}=\frac{1}{n!m!}\sum_{\sigma\in\mathbb{S}_n,\tau\in\mathbb{S}_m} \epsilon(\sigma)\epsilon(\tau)  \sigma\cdot f\cdot\tau \ ,\] with $f\in\Co(m,n)$. In what follows, we denote objects in $\Lambda\C$ simply $n$ instead of $(n,e_{(1^n)})$. \medskip

\noindent In order to  give  in Section  \ref{prop-partition},  an explicit description of the composition of partitions we introduce the category $\Co_{\Lambda}$ which is equivalent to $\Lambda \Co$.   More generally, if
the graded vector spaces $\Co(m, n)$ are endowed with a basis such that composition of elements
of the basis is up to a sign an element of the basis, then the combinatorics of the category $\Co_{\Lambda}$
is more easily understood than that of $\Lambda\C $.

\begin{definition}
Let $(\Co, \otimes, 1)$ be a prop. We denote by $\Co_{\Lambda}(m,n)$  the quotient of $\Co(m,n)$ by the relation \[f \sim \epsilon(\sigma)\epsilon(\tau) \tau\cdot f\cdot\sigma \ , \quad \mbox{for} \; \tau\in\mathbb{S}_n,\sigma\in\mathbb{S}_m \ .\] 
\end{definition}

\noindent The normalization map $f \mapsto e_{(1^n)} \diamond f \diamond e_{(1^m)}$ induces an isomorphism \[\C_{\Lambda}(m,n) \stackrel{\cong}{\to} \Lambda \C(m,n) \ ,\] for all $m,n\in\N$. Via these isomorphisms, one can define a category $\C_{\Lambda}$ isomorphic to the category $\Lambda\C$.
Next Theorem gives the explicit composition in $\C_\Lambda$ induced by that in  $\Lambda\C$.

\begin{theorem}\label{T:catquotient} Let $\C$ be a prop and $\varphi:\K[\Ss_n]\to\C(n,n)$ be the map induced by the action of the symmetric group. For $[f]\in \C_\Lambda(m,n)$ and  $[g]\in\C_\Lambda(l,m)$, the composition $[f]\tb [g]$ in the category $\C_\Lambda$ has the following form
	\[[f]\tb [g]=\frac{1}{m!}\sum_{\sigma\in\mathbb{S}_m}\epsilon(\sigma)[f\diamond\varphi(\sigma)\diamond g] \  . \] 
\end{theorem}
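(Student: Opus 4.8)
The plan is to compute the composition directly inside $\Lambda\C$, where by Definition \ref{Karoubi-env} it is simply the vertical composition $\diamond$ of the prop, and then to transport the answer back to $\C_\Lambda$ along the normalization isomorphism. Write $N(h)=e_{(1^n)}\diamond h\diamond e_{(1^l)}$ for the normalization map $\C(l,n)\xrightarrow{\cong}\Lambda\C(l,n)$. Choosing representatives $f\in\C(m,n)$ and $g\in\C(l,m)$ of the classes $[f]$ and $[g]$, the corresponding morphisms in $\Lambda\C$ are $N(f)=e_{(1^n)}\diamond f\diamond e_{(1^m)}$ and $N(g)=e_{(1^m)}\diamond g\diamond e_{(1^l)}$, and I would first form their composite
\[
N(f)\diamond N(g)=e_{(1^n)}\diamond f\diamond e_{(1^m)}\diamond e_{(1^m)}\diamond g\diamond e_{(1^l)}.
\]

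The crucial step is idempotency. Since $\varphi$ is a morphism of algebras (Remark \ref{phi}), the element $e_{(1^m)}=\varphi(e_{(1^m)})$ is an idempotent, so the two central copies collapse to one, giving
\[
N(f)\diamond N(g)=e_{(1^n)}\diamond\bigl(f\diamond e_{(1^m)}\diamond g\bigr)\diamond e_{(1^l)}=N\bigl(f\diamond e_{(1^m)}\diamond g\bigr).
\]
Expanding the single surviving idempotent by $e_{(1^m)}=\tfrac{1}{m!}\sum_{\sigma\in\mathbb{S}_m}\epsilon(\sigma)\varphi(\sigma)$ and using linearity of $\diamond$ then yields
\[
f\diamond e_{(1^m)}\diamond g=\frac{1}{m!}\sum_{\sigma\in\mathbb{S}_m}\epsilon(\sigma)\,f\diamond\varphi(\sigma)\diamond g.
\]
Transporting this equality back along the isomorphism $\C_\Lambda(l,n)\xrightarrow{\cong}\Lambda\C(l,n)$ identifies $[f]\tb[g]$ with the class $\tfrac{1}{m!}\sum_{\sigma}\epsilon(\sigma)[f\diamond\varphi(\sigma)\diamond g]$, which is the claimed formula. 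No Koszul signs appear, because every $e_{(1^k)}$ and every $\varphi(\sigma)$ is a degree-zero element of $\C$.

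The computation is short, so the only thing to watch is the bookkeeping of the two idempotents: one must keep a single copy of $e_{(1^m)}$ alive in order to realize the product as a value of $N$, rather than expanding both copies, which would spuriously produce a factor $1/(m!)^2$. This is exactly where the idempotency $e_{(1^m)}\diamond e_{(1^m)}=e_{(1^m)}$ does the work, and it is the main (mild) obstacle to phrasing the statement correctly. Well-definedness of $\tb$, i.e. independence of the chosen representatives $f$ and $g$, is then automatic, since $\tb$ is obtained by transporting the genuine composition of $\Lambda\C$ through an isomorphism; if desired it can also be checked directly from $\sigma\cdot e_{(1^k)}=\epsilon(\sigma)e_{(1^k)}=e_{(1^k)}\cdot\sigma$.
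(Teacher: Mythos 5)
Your proposal is correct and follows essentially the same route as the paper: compose the normalized representatives in $\Lambda\C$, collapse $e_{(1^m)}\diamond e_{(1^m)}$ to $e_{(1^m)}$ by idempotency, recognize the result as the normalization of $[f\diamond e_{(1^m)}\diamond g]$, and expand the idempotent as $\frac{1}{m!}\sum_{\sigma}\epsilon(\sigma)\varphi(\sigma)$. The paper additionally records that $[1_m]$ acts as the identity for $\tb$, but otherwise the arguments coincide.
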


\begin{proof}
	For all $n \in \mathbb{N}$, let us denote $e_n \coloneqq e_{(1^n)}$. Let $f\in\C(m,n)$ and $g\in\C(l,m)$. The normalization map sends any $\left[f\right]$ in $\C_\Lambda(m,n)$ to $e_n\diamond f\diamond e_m$ in $\Lambda C(m,n)$.  The composition of $e_n\diamond f\diamond e_m$ with $e_m\diamond g\diamond e_l$ in the category $\Lambda \C$, or equivalently in the category $\C(m,n)$ is :
	\[e_n\diamond f\diamond e_m\diamond e_m\diamond g\diamond e_l=e_n\diamond f\diamond e_m\diamond g\diamond e_l\]
	which is the image of $[f\diamond e_m\diamond g]\in \C_\Lambda(l,n)$ by the normalization map.
	This leads to 
	\[[f]\tb [g]=[f\diamond e_m\diamond g].\] In addition, we have $[1_m]\tb [g]=[e_m\diamond g]=[g]$ and $[f]\tb [1_m]=[f\diamond e_m]=[f]$.
\end{proof}

\begin{remark} Note that the family of quotient maps $\C(m,n)\to\C_\Lambda(m,n)$ does not provide a functor from $\C$ to $\C_\Lambda$.  The equivalence of categories $\Lambda\C\to\C_\Lambda$ is, on morphisms, the composition of the restriction of the semi-functor \[ \epsilon_\C:\Kar(\C)((m,e_{(1^m)}),(n,e_{(1^n)}))\to\C(m,n) \] to the category $\Lambda\C((m,e_{(1^m)}),(n,e_{(1^n)}))$ with the quotient map.
\end{remark}

\section{\textcolor{bordeau}{A graded linear prop spanned by partitions}} \label{prop-partition}

In Section \ref{2}, we associate to every prop $\C$ a category $\C_\Lambda$, which is equivalent to a subcategory $\Lambda\C$ of the Karoubi 
envelope of $\C$. This section focuses on the particular case of the prop $\E=\Omega\SCom$ (see Section \ref{1}) and its associated category $\E_\Lambda$ (see Section \ref{2}). The Karoubi envelope $\Kar(\E)$ inherits a prop structure from that of $\E$, see Proposition \ref{prop on Karoubi}. However, it does not induce a prop structure at the level of a given subcategory of $\Kar(\E)$ in general. For $\Lambda \E$, a necessary condition would be that \[e_{(1^m)}\otimes e_{(1^n)}=e_{(1^{m+n})} \ , \] in $\E$ which is not the case. Nonetheless, in Section \ref{prop-ELambda}, we extend the category $\E_\Lambda$ into a graded linear prop by introducing another monoidal product $\odot$.

\subsection{The composition in the category $\E_{\Lambda}$}

In this section, we describe the category structure $\E_{\Lambda}$ given by Theorem \ref{T:catquotient} in the case of $\C = \E$. We start by proving that this category is spanned by partitions.

\begin{proposition} \label{E_Lambda=part}
	For all $n,m \in \N$, there is an isomorphism \[\E_{\Lambda}^\bullet(m,n) \cong \left\{
	\begin{array}{ll}
		\K \left[\Part \left(m,n \right) \right]    & \mbox{if } \bullet = m-n, \text{ and } m\geqslant  n, \\
		0 &  \mbox{otherwise } 
	\end{array}
	\right. \] where $\Part \left(m,n \right) $ denotes the partitions of $m$ into $n$ parts. 
\end{proposition}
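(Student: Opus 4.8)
The plan is to combine the explicit basis of $\E^\bullet(m,n)$ with the sign-formula for the symmetric group actions from Theorem \ref{P:easyprop}(a), and then to recognize the quotient $\E_\Lambda(m,n)$ as the free vector space on the orbits of surjections under the two-sided symmetric group action, which are in bijection with partitions via the map $\proj$. First I would recall from Notation \ref{nu} that $\E^\bullet(m,n)$ is concentrated in degree $m-n$, with basis $\{\nu_f\}_{f\in\Surj(m,n)}$, and that it vanishes whenever $m<n$ (as then $\Surj(m,n)=\emptyset$). Since $\E_\Lambda(m,n)$ is by definition a quotient of $\E(m,n)$, it inherits this grading: it is concentrated in degree $m-n$ and vanishes unless $m\geqslant n$, which settles the ``otherwise'' case at once.

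Next I would rewrite the defining relation of $\E_\Lambda$ on the basis $\{\nu_f\}$. By Theorem \ref{P:easyprop}(a) we have $\tau\cdot\nu_f\cdot\sigma=\epsilon(\tau)\epsilon(\sigma)\,\nu_{\tau\circ f\circ\sigma}$, so the relation $\nu_f\sim\epsilon(\sigma)\epsilon(\tau)\,\tau\cdot\nu_f\cdot\sigma$ reduces, after the signs cancel, to the sign-free identification $\nu_f\sim\nu_{\tau\circ f\circ\sigma}$ for all $\tau\in\Ss_n$ and $\sigma\in\Ss_m$. In particular no basis vector is ever identified with its own negative, so none of the $\nu_f$ becomes zero in the quotient, and $\E_\Lambda(m,n)$ is free on the set of orbits of $\Surj(m,n)$ under the two-sided action $(\tau,\sigma)\cdot f=\tau\circ f\circ\sigma$.

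The key step, and the one requiring the most care, is to identify these orbits with $\Part(m,n)$ through $\proj$. That $\proj$ is constant on orbits is immediate from $|(\tau\circ f\circ\sigma)^{-1}(j)|=|f^{-1}(\tau^{-1}(j))|$, which shows the action only permutes the multiset of fiber cardinalities. For the converse I would show that the two-sided action is transitive on each fiber of $\proj$: the right $\Ss_m$-action is already transitive on the surjections with a prescribed ordered tuple of fiber sizes (given two such surjections, a permutation of the domain carrying the fibers of one bijectively onto those of the other realizes the identity between them), while the left $\Ss_n$-action realizes every reordering of that ordered tuple. Equivalently, one may use the decomposition $f=s\circ\alpha$ of Notation \ref{N:dec} to bring any $f$ to the order-preserving surjection with ordered fiber sizes $(|f^{-1}(1)|,\dots,|f^{-1}(n)|)$ by a right action, and then sort these sizes into decreasing order by a left action, reaching the canonical representative attached to $\proj(f)$. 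Hence $\proj$ induces a bijection from orbits to $\Part(m,n)$, and combining this with the basis description of the previous step yields the isomorphism $\E_\Lambda^{m-n}(m,n)\cong\K[\Part(m,n)]$. The sign bookkeeping is immediate from Theorem \ref{P:easyprop}(a); the genuine content lies entirely in this orbit–partition correspondence.
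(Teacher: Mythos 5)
Your proposal is correct and follows essentially the same route as the paper, which deduces the proposition from Lemma \ref{equivalences}: the sign formula of Theorem \ref{P:easyprop}(a) makes the defining relation of $\E_\Lambda$ sign-free on the basis $\{\nu_f\}$, and the orbits of the two-sided $\Ss_n\times\Ss_m$-action on $\Surj(m,n)$ are identified with $\Part(m,n)$ via $\proj$ using the decomposition $f=s\circ\alpha$ of Notation (\ref{N:dec}). Your explicit remark that no $\nu_f$ is identified with its own negative (so no basis vector dies in the quotient) is a worthwhile point that the paper leaves implicit in the equivalence ($i$)$\iff$($ii$) of that lemma.
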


\begin{proof}
    The result follows directly from Lemma \ref{equivalences}. 
\end{proof}

\begin{lemma}\label{equivalences}
For all $f,g\in\Surj(m,n)$, the following propositions are equivalent : 
\begin{enumerate}
    \item[($i$)] $[\nu_f]=[\nu_g] \in \E_\Lambda^{m-n}(m,n) \ ;$
    \item[($ii$)] there exist $\tau \in \Ss_m$ and $\sigma\in\Ss_n$ such that $g=\sigma\circ f\circ\tau$; 
    \item[($iii$)] $\proj(f)=\proj(g)$,
\end{enumerate} where $\proj:\Surj(m,n)\to\Part(m,n)$ is the map defined in Notation (\ref{N:part}). 
\end{lemma}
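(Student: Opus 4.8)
The plan is to prove the chain of equivalences by splitting it into an algebraic part, (i) $\Leftrightarrow$ (ii), which reduces the question to understanding the quotient defining $\E_\Lambda$, and a purely combinatorial part, (ii) $\Leftrightarrow$ (iii), about fibers of surjections. The key input for the first part is the clean action formula of Theorem \ref{P:easyprop}(a); the second part is elementary set combinatorics.

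For (i) $\Leftrightarrow$ (ii) I would start from the observation that $\E^{m-n}(m,n)$ has basis $\{\nu_f\}_{f\in\Surj(m,n)}$, and that $\E_\Lambda^{m-n}(m,n)$ is its quotient by the subspace $W$ spanned by the elements $x-\epsilon(\sigma)\epsilon(\tau)\,\tau\cdot x\cdot\sigma$ with $\tau\in\Ss_n$ and $\sigma\in\Ss_m$. Taking $x=\nu_f$ and substituting the action formula $\tau\cdot\nu_f\cdot\sigma=\epsilon(\tau)\epsilon(\sigma)\nu_{\tau\circ f\circ\sigma}$ from Theorem \ref{P:easyprop}(a), the two sign factors cancel, so each generator of $W$ becomes exactly $\nu_f-\nu_{\tau\circ f\circ\sigma}$. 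Hence $W$ is the span of differences of basis vectors lying in the same orbit of the (honest, sign-free) permutation action $f\mapsto\tau\circ f\circ\sigma$ of $\Ss_n\times\Ss_m$ on $\Surj(m,n)$. For such an action the quotient admits a basis indexed by the orbits, and $[\nu_f]=[\nu_g]$ holds if and only if $f$ and $g$ lie in a common orbit, which is precisely statement (ii) after renaming the permutations.

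For (ii) $\Rightarrow$ (iii), if $g=\sigma\circ f\circ\tau$ with $\sigma\in\Ss_n$ and $\tau\in\Ss_m$, then $g^{-1}(j)=\tau^{-1}\bigl(f^{-1}(\sigma^{-1}(j))\bigr)$, and since $\tau$ is a bijection we get $|g^{-1}(j)|=|f^{-1}(\sigma^{-1}(j))|$; thus $f$ and $g$ have the same multiset of fiber cardinalities and so $\proj(f)=\proj(g)$. For the converse (iii) $\Rightarrow$ (ii), assume $\proj(f)=\proj(g)$. Then the fiber sizes match as multisets, so I can choose $\sigma\in\Ss_n$ with $|g^{-1}(j)|=|f^{-1}(\sigma^{-1}(j))|$ for every $j$. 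The families $\{g^{-1}(j)\}_j$ and $\{f^{-1}(\sigma^{-1}(j))\}_j$ are then two partitions of $\{1,\dots,m\}$ into blocks of matching sizes; picking a bijection $g^{-1}(j)\to f^{-1}(\sigma^{-1}(j))$ for each $j$ and gluing them yields a permutation $\tau\in\Ss_m$ sending each $g^{-1}(j)$ onto $f^{-1}(\sigma^{-1}(j))$, which is exactly the condition $g=\sigma\circ f\circ\tau$.

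The only genuinely delicate point is the sign bookkeeping in (i) $\Leftrightarrow$ (ii): one must check that the factors $\epsilon(\sigma)\epsilon(\tau)$ built into the defining relation of $\E_\Lambda$ cancel exactly against the signs produced by the symmetric-group action on the $\nu_f$, so that the quotient identifies basis vectors with no extra scalar and is therefore governed cleanly by the orbit set. Once this is secured, everything else is routine verification on fibers.
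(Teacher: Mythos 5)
Your proof is correct and follows essentially the same route as the paper: the equivalence $(i)\Leftrightarrow(ii)$ rests on the sign cancellation from Theorem \ref{P:easyprop}(a) turning the defining relation of $\E_\Lambda$ into a pure orbit identification of basis vectors (you spell out the orbit-basis argument slightly more carefully than the paper does), and $(ii)\Leftrightarrow(iii)$ is the same fiber-counting argument. The only cosmetic difference is in $(iii)\Rightarrow(ii)$, where you construct $\tau$ by gluing bijections between matching fibers directly, while the paper factors both $f$ and $g$ through the unique order-preserving surjection with the given partition; both yield the same permutations.
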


\begin{proof} 
We have $[\nu_f]=[\nu_g]$ if and only if there exist $\tau\in\Ss_m$ and $\sigma\in\Ss_n$ such that \[\nu_g=\epsilon(\sigma)\epsilon(\tau)\sigma\cdot \nu_f\cdot\tau=\nu_{\sigma\circ f\circ\tau} \ . \] This proves the equivalence ($i$)$\iff$($ii$). The implication ($ii$) $\Longrightarrow$ ($iii$) is immediate. Conversely, we assume that $\proj(f)=\proj(g)$ and let us denote by $\lambda_1\geqslant \ldots\geqslant \lambda_n$ this partition. 
There is a unique $s\in\OSurj(m,n)$ such that $|s^{-1}(i)|=\lambda_i$, for all $i$ and there exists a permutation $\beta\in\Ss_n$ such that  
$|f^{-1}(i)|=\lambda_{\beta(i)}$. In particular, the surjection $\beta\circ f$ writes uniquely as $\beta\circ f=s\circ u$ with $u\in\Sh_{\beta\circ f}$, 
so that $f$ decomposes as $\beta^{-1}\circ s\circ u$. Similarly, there exist $\gamma \in\Ss_n,v\in\Ss_m$ such that $g=\gamma^{-1}\circ s\circ v$.
By construction, $\sigma = \gamma^{-1} \circ \beta$ and $\tau = u^{-1} \circ v$ are such that $g=\sigma\circ f\circ \tau$, which proves ($iii$) $\Longrightarrow$ ($ii$). 
\end{proof}

\begin{notation} \rm 
For a given partition $\lambda\in\Part(m,n)$, we denote \[\rho_\lambda \coloneqq [\nu_f] \in\E_\Lambda^{m-n}(m,n) \] the class of $\nu_f$, where $f$ is any surjection such that $\proj(f)=\lambda$.
\end{notation}

\begin{theorem}\label{T:compLambdaLC} The composition of two basis elements in the category $\E_{\Lambda}$ is a weighted average of basis elements, i.e. for  $\lambda\in\Part(m,n)$ and $\mu\in\Part(l,m)$, we have
\[\rho_\lambda\tb \rho_\mu=\sum_{\alpha\in\Part(l,n)} c_{\alpha}^{\lambda,\mu} \rho_\alpha \]
where $ m!c_{\alpha}^{\lambda,\mu}\in\N$ and  $\sum_\alpha c_\alpha^{\lambda,\mu}=1.$ The identity morphisms are given by $\rho_{(1^m)} \in \E_{\Lambda}(m,m)$.
\end{theorem}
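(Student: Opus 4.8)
The plan is to specialize the general composition formula of Theorem \ref{T:catquotient} to the prop $\E$, where everything collapses thanks to the good behavior of the generators $\nu_f$ recorded in Theorem \ref{P:easyprop}. First I would choose representatives: pick $f\in\Surj(m,n)$ with $\proj(f)=\lambda$ and $g\in\Surj(l,m)$ with $\proj(g)=\mu$, so that $\rho_\lambda=[\nu_f]$ and $\rho_\mu=[\nu_g]$. Theorem \ref{T:catquotient} then gives
\[\rho_\lambda\tb\rho_\mu=\frac{1}{m!}\sum_{\sigma\in\mathbb{S}_m}\epsilon(\sigma)\,[\nu_f\diamond\varphi(\sigma)\diamond\nu_g].\]
Next I would substitute the identity $\varphi(\sigma)=\epsilon(\sigma)\nu_\sigma$ from Notation \ref{nu} and apply the composition rule $\nu_a\diamond\nu_b=\nu_{a\circ b}$ of Theorem \ref{P:easyprop}(c) twice, obtaining $\nu_f\diamond\varphi(\sigma)\diamond\nu_g=\epsilon(\sigma)\nu_{f\circ\sigma\circ g}$. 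The two factors of $\epsilon(\sigma)$ cancel, and since $[\nu_{f\circ\sigma\circ g}]=\rho_{\proj(f\circ\sigma\circ g)}$ by definition of $\rho$, this leaves the clean expression
\[\rho_\lambda\tb\rho_\mu=\frac{1}{m!}\sum_{\sigma\in\mathbb{S}_m}\rho_{\proj(f\circ\sigma\circ g)}.\]

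The remainder is combinatorial bookkeeping. Grouping the sum according to the value $\alpha=\proj(f\circ\sigma\circ g)\in\Part(l,n)$ produces the asserted expansion $\rho_\lambda\tb\rho_\mu=\sum_\alpha c_\alpha^{\lambda,\mu}\rho_\alpha$ with
\[c_\alpha^{\lambda,\mu}=\frac{1}{m!}\,|\{\sigma\in\mathbb{S}_m:\proj(f\circ\sigma\circ g)=\alpha\}|.\]
From this closed form $m!\,c_\alpha^{\lambda,\mu}$ is visibly a cardinality, hence lies in $\N$, and summing over $\alpha$ counts each $\sigma$ exactly once, giving $\sum_\alpha c_\alpha^{\lambda,\mu}=\tfrac{1}{m!}|\mathbb{S}_m|=1$. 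For the identity morphism I would observe that the identity surjection $1_m\in\Surj(m,m)$ has all fibers of cardinal one, so $\proj(1_m)=(1^m)$, and that $\kappa$ and the sign $\epsilon(\alpha)$ both vanish for it, whence $\nu_{1_m}=1^{\otimes m}=1_m$ and $\rho_{(1^m)}=[1_m]$; the relations $[1_m]\tb[g]=[g]$ and $[f]\tb[1_m]=[f]$ already established in the proof of Theorem \ref{T:catquotient} then identify $\rho_{(1^m)}$ as the two-sided unit.

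I do not expect any serious obstacle. The only points that warrant care are the cancellation of the two signs $\epsilon(\sigma)$ and the legitimacy of choosing the representatives $f$ and $g$ freely: the latter is ensured because $\rho_\lambda\tb\rho_\mu$ is intrinsically defined in $\E_\Lambda$, so once the displayed formula is obtained, the coefficients $c_\alpha^{\lambda,\mu}$ are uniquely determined by the basis $\{\rho_\alpha\}$ of Proposition \ref{E_Lambda=part} and are therefore independent of the choices. Concretely, Lemma \ref{equivalences} guarantees that each class $[\nu_{f\circ\sigma\circ g}]$ depends only on the partition $\proj(f\circ\sigma\circ g)$, which is all that enters the final count.
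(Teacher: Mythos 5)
Your proposal is correct and follows essentially the same route as the paper: specialize Theorem \ref{T:catquotient}, substitute $\varphi(\sigma)=\epsilon(\sigma)\nu_\sigma$, and use Theorem \ref{P:easyprop}(c) to collapse the sum to $\frac{1}{m!}\sum_{\sigma\in\mathbb{S}_m}[\nu_{f\circ\sigma\circ g}]$. You merely make explicit the combinatorial bookkeeping (the closed form of $c_\alpha^{\lambda,\mu}$ as a normalized cardinality and the identification of the unit) that the paper leaves implicit.
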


\begin{proof} Let $f\in\Surj(m,n)$ and $g\in\Surj(l,m)$  such that $\proj(f)=\lambda$ and $\proj(g)=\mu$. By Theorem \ref{T:catquotient}
	we have
	\[[\nu_f]\tb [\nu_g]=\frac{1}{m!}\sum_{\sigma\in\mathbb{S}_m}\epsilon(\sigma) [\nu_{f}\diamond\varphi(\sigma)\diamond \nu_g]\]    
	and $\varphi(\sigma)=\epsilon(\sigma) \nu_\sigma$, see Notation \ref{nu}. This leads to 
	\begin{equation}\label{E:comp}
	     [\nu_f]\tb [\nu_g]=\frac{1}{m!}\sum_{\sigma\in\mathbb{S}_m} [\nu_{f\circ\sigma\circ g}] \ . \qedhere
      \end{equation}
\end{proof}

\begin{corollary} \label{P:keycompG} 
For any partition $\lambda=\lambda_1\geqslant \ldots\geqslant \lambda_n\in\Part(m,n)$, we have
\[\rho_\lambda\tb \rho_{(2,1^{m-1})}=\frac{1}{m}\sum_{i=1}^{n}\lambda_i [\nu_{s(\lambda_1,\ldots,\lambda_{i-1},\lambda_i+1,\lambda_{i+1},\ldots,\lambda_n)}] \ ,\] where $s(p_1,\ldots,p_n)$ denotes the unique order-preserving surjection $s \in \OSurj(m,n)$ such that $|s^{-1}(i)|=p_i$ and $m = p_1 + \cdots + p_n$. \label{s()}
\end{corollary}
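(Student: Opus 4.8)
The plan is to invoke Equation~(\ref{E:comp}) from Theorem~\ref{T:compLambdaLC} with well-chosen representatives. Since the class $[\nu_f]\in\E_\Lambda$ depends only on $\proj(f)$ by Lemma~\ref{equivalences}, I would represent $\rho_\lambda$ by $f=s(\lambda_1,\ldots,\lambda_n)\in\OSurj(m,n)$, the order-preserving surjection with $|f^{-1}(i)|=\lambda_i$, and represent $\rho_{(2,1^{m-1})}$ by $g=s(2,1^{m-1})\in\OSurj(m+1,m)$, so that $g^{-1}(1)=\{1,2\}$ and $|g^{-1}(k)|=1$ for $k\geqslant 2$. Equation~(\ref{E:comp}) then reads
\[\rho_\lambda\tb\rho_{(2,1^{m-1})}=\frac{1}{m!}\sum_{\sigma\in\Ss_m}[\nu_{f\circ\sigma\circ g}]\in\E_\Lambda^{m+1-n}(m+1,n) \ .\]

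The crux is to identify the partition $\proj(f\circ\sigma\circ g)$ for each $\sigma$. First I would observe that $f\circ\sigma$ has the same fiber sizes as $f$, namely $|(f\sigma)^{-1}(j)|=\lambda_j$, because precomposition by a bijection preserves the cardinality of fibers. Precomposing further with $g$ only duplicates the single point $1\in\{1,\ldots,m\}$, the unique point whose $g$-fiber has two elements, so a direct count gives
\[|(f\circ\sigma\circ g)^{-1}(j)|=\lambda_j+\delta_{j,f(\sigma(1))} \ .\]
Thus the fiber sizes of $f\circ\sigma\circ g$ are those of $\lambda$ with the part indexed by $j_0:=f(\sigma(1))$ increased by one, and Lemma~\ref{equivalences} yields $[\nu_{f\circ\sigma\circ g}]=[\nu_{s(\lambda_1,\ldots,\lambda_{j_0}+1,\ldots,\lambda_n)}]$.

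It then remains to organize the sum according to the value $j_0=f(\sigma(1))$. For fixed $j_0\in\{1,\ldots,n\}$, the condition $f(\sigma(1))=j_0$ forces $\sigma(1)\in f^{-1}(j_0)$, a set of $\lambda_{j_0}$ elements, while the restriction of $\sigma$ to $\{2,\ldots,m\}$ is an arbitrary bijection onto the complement; hence exactly $\lambda_{j_0}(m-1)!$ permutations contribute the term $[\nu_{s(\lambda_1,\ldots,\lambda_{j_0}+1,\ldots,\lambda_n)}]$. Substituting and using $(m-1)!/m!=1/m$ gives the stated formula. The main obstacle is purely bookkeeping: keeping the composition order straight and confirming that precomposition by $g$ changes only the cardinality of the fiber through $f(\sigma(1))$; once the representatives $f$ and $g$ are fixed, the counting argument is routine.
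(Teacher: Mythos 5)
Your proof is correct and follows essentially the same route as the paper: both choose the order-preserving representatives $f=s(\lambda_1,\ldots,\lambda_n)$ and $g=s(2,1^{m-1})$, apply Equation~(\ref{E:comp}), and count the permutations $\sigma$ according to which fiber of $f$ contains $\sigma(1)$. The only (immaterial) difference is bookkeeping: the paper first decomposes $\sigma\circ g$ as $t\circ u$ with $t\in\OSurj(m+1,m)$ and groups by $j=\sigma(1)$ before regrouping by fibers, whereas you compute the fiber cardinalities of $f\circ\sigma\circ g$ directly and group by $j_0=f(\sigma(1))$ in one step.
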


\begin{proof} 
Let $f=s(\lambda_1,\ldots,\lambda_n)\in\OSurj(m,n)$ and $g=s(2,1,\ldots,1)\in\OSurj(m+1,m).$ By Formula (\ref{E:comp}),
we have
\[\rho_\lambda\tb \rho_{(2,1^{m-1})}=\frac{1}{m!}\sum_{\sigma
\in\Ss_m} [\nu_{f\circ\sigma\circ g}]\ .\]
For $\sigma\in\Ss_m$, there is a unique decomposition 
$\sigma\circ g=t \circ u$ with $t\in\OSurj(m+1,m)$ and $ u \in\Sh_{\sigma\circ g}$. For all $1\leqslant  j\leqslant  m$, we have that \[|t^{-1}(j)|=2 \; \iff \; \sigma(1)=j \ . \] There are $(m-1)!$ permutations $\sigma$ such that 
$\sigma(1)=j$. In that case, we have $t=s(1^{j-1},2,1^{m-j})$. 
By Lemma \ref{equivalences}, we have $[\nu_{f\circ t\circ u}]=[\nu_{f\circ t}]$ , so that
\[\rho_\lambda\tb \rho_{(2,1^{m-1})}=\frac{1}{m}\sum_{j=1}^{m}
[\nu_{f\circ s(1^{j-1},2,1^{m-j})}].\]
Finally, we decompose the sum as
\[\sum_{j=1}^{m}
[\nu_{f\circ s(1^{j-1},2,1^{m-j})}]=
\sum_{i=1}^n\sum_{j= \lambda_1+\cdots+\lambda_{i-1}+1}^{\lambda_1+\cdots+\lambda_{i-1}+\lambda_i} [\nu_{f\circ s(1^{j-1},2,1^{m-j})}] \ .\] We conclude by noting that for $\lambda_1+\ldots+\lambda_{i-1}+1 \leqslant  j\leqslant  \lambda_1+\ldots+\lambda_{i-1}+\lambda_i$, we have
\[[\nu_{f\circ s(1^{j-1},2,1^{m-j})}]=[\nu_{s(\lambda_1,\ldots,\lambda_{i-1},\lambda_i+1,\lambda_{i+1},\ldots,\lambda_n)}] \ . \qedhere \] 
\end{proof}

\begin{example} The formula of Corollary \ref{P:keycompG} gives
\[\rho_{(3,3,1)}\tb \rho_{(2,1^{6})}=
\frac{1}{7}(3[\nu_{s(4,3,1)}]+3[\nu_{s(3,4,1)}]+[\nu_{s(3,3,2)}])=\frac{1}{7} (6\rho_{(4,3,1)}+\rho_{(3,3,2)}) \ .\] 
\end{example}

\subsection{A particular prop structure}\label{prop-ELambda}

In this section, we introduce a particular monoidal product $\odot$ which turns the category $\E_\Lambda$ into a prop. 

\begin{notation}	
	Let us define a family of elements $P_{m,n} \in \E^{m-n}_\Lambda(m,n)$ for $m\geqslant  n$ as follows:
	\begin{enumerate}
	\item $P_{m,m}= \rho_{(1^m)} \in$ $\E^0_\Lambda(m,m)$, \smallskip
		\item $P_{m,m-1}= \rho_{(2,1^{m-2})} \in$  $\E^1_\Lambda(m,m-1)$, 
  \smallskip
		\item $ P_{m,n}=  P_{n+1,n}\tb\ldots\tb P_{m,m-1}\in$ $\E^{m-n}_\Lambda(m,n)$.	
	\end{enumerate}
\end{notation}


\begin{lemma}\label{L:P_descr} The element $P_{m,n}\in\E^{m-n}_\Lambda(m,n)$ satisfies
\[P_{m,n}=\frac{1}{|\OSurj(m,n)|}\sum_{s\in\OSurj(m,n)}[\nu_s]=\frac{1}{|\OSurj(m,n)|}\sum_{s\in\OSurj(m,n)}\rho_{\proj(s)} \ .\] 
\end{lemma}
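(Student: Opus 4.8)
The plan is to prove the formula
\[
P_{m,n}=\frac{1}{|\OSurj(m,n)|}\sum_{s\in\OSurj(m,n)}[\nu_s]
\]
by induction on $m-n$, using the recursive definition $P_{m,n}=P_{n+1,n}\tb\cdots\tb P_{m,m-1}$ together with Corollary \ref{P:keycompG}. The base case $m=n$ is immediate since $P_{m,m}=\rho_{(1^m)}=[\nu_{\mathrm{id}}]$ and $\OSurj(m,m)$ has a single element, and the case $m-n=1$ is precisely $P_{m,m-1}=\rho_{(2,1^{m-2})}$, where $\OSurj(m,m-1)$ consists of the surjections with one fiber of size $2$; these are all equal in $\E_\Lambda$ by Lemma \ref{equivalences}, matching $\rho_{(2,1^{m-2})}$.

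For the inductive step, I would write $P_{m,n}=P_{m-1,n}\tb P_{m,m-1}$ and apply the induction hypothesis to $P_{m-1,n}=\frac{1}{|\OSurj(m-1,n)|}\sum_{t\in\OSurj(m-1,n)}\rho_{\proj(t)}$. Then I would compute each $\rho_{\proj(t)}\tb P_{m,m-1}=\rho_{\proj(t)}\tb\rho_{(2,1^{m-2})}$ using Corollary \ref{P:keycompG}: for $\lambda=\proj(t)$ this expands $\rho_\lambda\tb\rho_{(2,1^{m-2})}$ as a weighted sum $\frac{1}{m-1}\sum_i\lambda_i[\nu_{s(\ldots,\lambda_i+1,\ldots)}]$ over the ways of incrementing one part of $\lambda$. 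The goal is to check that, after summing over all $t\in\OSurj(m-1,n)$ with the normalizing factor, the total coefficient of each $\rho_\alpha$ with $\alpha\in\OSurj(m,n)$ becomes the uniform value $1/|\OSurj(m,n)|$.

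The main obstacle is the combinatorial bookkeeping in this step: one must verify that the various weighted contributions reassemble into a \emph{uniform} average over $\OSurj(m,n)$. The cleanest way to handle this is to reinterpret the recursion combinatorially. A surjection $s\in\OSurj(m,n)$ is determined by its ordered fiber sizes $(p_1,\ldots,p_n)$ with $\sum p_i=m$, i.e. by a composition of $m$ into $n$ positive parts; incrementing one part corresponds to the map $\OSurj(m-1,n)\to\OSurj(m,n)$ appearing in Corollary \ref{P:keycompG}. The identity to establish is that the Corollary's weights $\lambda_i/(m-1)$, summed over preimages $t$ of a fixed target $s$ under "decrement a part," produce a constant; this is the statement that $|\OSurj(m,n)|=\binom{m-1}{n-1}$ and that the weighted incidence between compositions of $m-1$ and compositions of $m$ is balanced. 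Concretely, I would show $\sum_{t}\frac{\lambda_i(t)}{(m-1)|\OSurj(m-1,n)|}=\frac{1}{|\OSurj(m,n)|}$ for every target $s$, where the sum is over pairs $(t,i)$ mapping to $s$. I expect this to reduce, via $\binom{m-1}{n-1}=\frac{m-1}{m-n}\binom{m-2}{n-1}$ type identities (or a direct double-counting of "place an extra unit among $m$ slots split into $n$ ordered parts"), to an elementary verification. The second equality in the statement, $[\nu_s]=\rho_{\proj(s)}$, is just the definition of $\rho$.
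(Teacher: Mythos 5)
Your proposal is correct and follows essentially the same route as the paper: induction on $m-n$ via the recursion $P_{m,n}=P_{m-1,n}\tb P_{m,m-1}$, expansion of each term by Corollary \ref{P:keycompG}, and the observation that the coefficient of each target $s(q_1,\ldots,q_n)$ collapses to the constant $\frac{1}{m-1}\sum_i(q_i-1)=\frac{m-n}{m-1}$ times the old normalization, hence is uniform. The only cosmetic difference is that the paper identifies the resulting constant as $1/|\OSurj(m,n)|$ by invoking that the weights sum to $1$ (Theorem \ref{T:compLambdaLC}), thereby deriving $|\OSurj(m,n)|=\tfrac{m-1}{m-n}|\OSurj(m-1,n)|$ as a byproduct, whereas you verify the same identity directly from $|\OSurj(m,n)|=\binom{m-1}{n-1}$.
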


\begin{proof}
Let us prove the result by induction on $m\geqslant  n$. For $m=n$, we have by definition $P_{n,n}=\rho_{(1^n)}$ and there is only one $s\in\OSurj(n,n)$ such that $\proj(s)=(1^n)$. Similarly, $\Part(n+1,n)$ has only one element, so that the formula boils down to \[ P_{n+1,n}=\rho_{(2,1^{n-1})} \ .\]
Assume the formula is true for $P_{m,n}$. We have $P_{m+1,n}=P_{m,n} \tb P_{m+1,m}$, that is,
\[P_{m+1,n}=\frac{1}{|\OSurj(m,n)|}\sum_{s\in\OSurj(m,n)}[\nu_s]\tb \rho_{(2,1^{m-1})}\ .\]
By Theorem \ref{T:compLambdaLC}, we know that $P_{m+1,n}$ is a weighted average of the elements $\rho_\lambda$, 
where $\lambda$ runs in $\Part(m+1,n)$. For $s=s(p_1,\ldots,p_n)$ the order-preserving surjection such that $|s^{-1}(i)|=p_i$, 
we have
\[[\nu_s]\tb \rho_{(2,1^{m-1})}=\frac{1}{m}\sum_{i=1}^{n}p_i [\nu_{s(p_1,\ldots,p_{i-1},p_i+1,p_{i+1},\ldots,p_n)}]\ ,\] by Proposition \ref{P:keycompG}. 
Let $s(q_1,\ldots,q_n)\in \OSurj(m+1,n)$. In the composition $P_{m,n}\tb \rho_{(2,1^{m-1})} \ ,$ the element  $[\nu_{s(q_1,\ldots,q_n})]$ appears with weight
\[\frac{1}{m|\OSurj(m,n)|}\sum_{i=1}^n(q_i-1)=\frac{m+1-n}{m|\OSurj(m,n)|}\,\]
which is independant of the chosen order-preserving surjection in $\OSurj(m+1,n)$. This gives the desired result. We also obtain that for $m +1 > n$,
\[|\OSurj(m+1,n)|=|\OSurj(m,n)|\frac{m}{m+1-n} \ . \qedhere \]
\end{proof}

\begin{remark} One can also express $P_{m,n}$ in the basis $\rho_\lambda$ with $\lambda\in \Part(m,n)$, by counting the number of order-preserving surjections $s$ such that $\proj(s)=\lambda$. As an example, we have
    \[P_{6,3}=\frac{1}{10}(3\rho_{(4,1,1)}+6\rho_{(3,2,1)}+\rho_{(2,2,2)}) \ .\]
\end{remark}

\begin{theorem}\label{T:main} For $\alpha\in\Part(m,n)$ and $\beta\in\Part(m',n')$, the monoidal product defined as
	\[ \rho_\alpha\odot \rho_\beta=(-1)^{\mathrm{d}(\rho_{\alpha})n'} P_{m+m',n+n'} \ , \] endows the category $\E_\Lambda$ with a structure of symmetric monoidal category.
\end{theorem}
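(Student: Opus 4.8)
The plan is to check that the quintuple $(\E_\Lambda,\odot,0,a,\gamma)$ is a symmetric monoidal category, taking the unit object to be $0$, the monoidal product on objects to be addition, and $\odot$ on morphisms to be the bilinear extension of the displayed formula. Everything reduces to three structural facts about the elements $P_{m,n}$: (i) $P_{n,n}=\rho_{(1^n)}=\mathrm{id}_n$; (ii) the telescoping identity $P_{m,n}\tb P_{l,m}=P_{l,n}$, immediate from $P_{m,n}=P_{n+1,n}\tb\cdots\tb P_{m,m-1}$ and associativity of $\tb$; and (iii) the fact, from Theorem \ref{T:compLambdaLC} and Lemma \ref{L:P_descr}, that every composite $\rho_\lambda\tb\rho_\mu$, and $P_{m,n}$ itself, is a combination of basis elements of $\Part(m,n)$ whose coefficients sum to $1$. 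Because all $\rho_\alpha$ with $\alpha\in\Part(m,n)$ have the same degree $m-n$, property (iii) forces $\odot$ to collapse any such weight-$1$ combination in either slot to a single term $(-1)^{(m-n)n'}P_{m+m',n+n'}$; this ``absorption'' is what makes the verifications go through.

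First I would record that $\odot$ is well defined and bilinear, since its value depends on $\lambda,\mu$ only through the integers $m,n,m',n'$. The heart of functoriality is the interchange law. For $\rho_\lambda\in\E_\Lambda(m,n)$, $\rho_\nu\in\E_\Lambda(l,m)$, $\rho_\mu\in\E_\Lambda(m',n')$, $\rho_\xi\in\E_\Lambda(l',m')$ I would prove
\[
(\rho_\lambda\tb\rho_\nu)\odot(\rho_\mu\tb\rho_\xi)=(-1)^{\mathrm{d}(\rho_\nu)\mathrm{d}(\rho_\mu)}(\rho_\lambda\odot\rho_\mu)\tb(\rho_\nu\odot\rho_\xi).
\]
By absorption the left side equals $(-1)^{(l-n)n'}P_{l+l',n+n'}$; on the right, the two $\odot$-factors are $(-1)^{(m-n)n'}P_{m+m',n+n'}$ and $(-1)^{(l-m)m'}P_{l+l',m+m'}$, whose $\tb$-composite is $(-1)^{(m-n)n'+(l-m)m'}P_{l+l',n+n'}$ by the telescoping identity. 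With $\mathrm{d}(\rho_\nu)=l-m$ and $\mathrm{d}(\rho_\mu)=m'-n'$, matching the two sides amounts to the parity congruence $(l-n)n'\equiv(l-m)(m'-n')+(m-n)n'+(l-m)m'\pmod 2$, which collapses to $(l-n)n'\equiv(l+n)n'$ and hence holds.

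Associativity is handled identically: both $(\rho_\lambda\odot\rho_\mu)\odot\rho_\pi$ and $\rho_\lambda\odot(\rho_\mu\odot\rho_\pi)$ absorb to $P_{m+m'+m'',n+n'+n''}$ carrying the signs $(m-n)n'+(m+m'-n-n')n''$ and $(m'-n')n''+(m-n)(n'+n'')$, which are equal as integers; thus the associator is the identity and the pentagon is automatic. For the symmetry I would descend the swap $s_{m,m'}$ of $\E$ to $\E_\Lambda$: being a permutation, its class is $\pm\rho_{(1^{m+m'})}=\pm\mathrm{id}$, which is the candidate braiding. Its naturality, the two hexagons, and the graded-symmetry relation of Definition \ref{D:prop} then become sign identities between scalar multiples of a common $P$, verifiable by the same bookkeeping.

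The step I expect to be the main obstacle is the unit. Unlike in a strict prop, the functor $0\odot(-)$ does not act as the identity on morphisms --- it sends $\rho_\beta\in\E_\Lambda(m',n')$ to $P_{m',n'}$ --- so the left and right unitors are not the identity and the triangle identity is not formal. This is exactly the phenomenon flagged in the introduction, that $e_{(1^m)}\otimes e_{(1^n)}\ne e_{(1^{m+n})}$, now visible one level down. Here I would either exhibit the unitors explicitly and check naturality and the triangle using $P_{n,n}=\mathrm{id}_n$ together with the telescoping and weight-$1$ properties, or, more conceptually, realize $(\E_\Lambda,\odot)$ through the equivalence $\E_\Lambda\simeq\Lambda\E$ as the idempotent-corrected monoidal structure coming from the retraction of $(m+n,e_{(1^m)}\otimes e_{(1^n)})$ onto $(m+n,e_{(1^{m+n})})$ in $\Kar(\E)$, where the unit $0$ and the coherence isomorphisms are inherited from Proposition \ref{prop on Karoubi}. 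Reconciling that conceptual description with the explicit formula, and pinning down the unitors, is where the real work lies; once the associator is the identity, the remaining coherence (pentagon, hexagons) is formal.
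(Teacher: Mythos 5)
Your core verifications are exactly the paper's proof: the interchange law via the ``weight~$1$'' property of Theorem \ref{T:compLambdaLC} together with the telescoping identity $P_{m+m',n+n'}\tb P_{l+l',m+m'}=P_{l+l',n+n'}$ and the parity check, associativity by the same absorption argument (the paper simply declares it ``immediate''), and the symmetry $s_{m,m'}=(-1)^{mm'}\rho_{(1^{m+m'})}$ with the graded compatibility reducing to $(-1)^{mn'}P_{m+m',n+n'}$ on both sides. All of your sign computations are correct and match the paper's.

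Where you diverge is the unit, which the paper's proof does not mention at all. Your observation is genuinely correct: $\mathrm{id}_0\odot\rho_\beta=P_{m',n'}$, which is not $\rho_\beta$ once $|\Part(m',n')|\geqslant 2$ (e.g.\ $P_{4,2}=\tfrac13(2\rho_{(3,1)}+\rho_{(2,2)})$). However, neither of your proposed repairs can succeed. Since $\E_\Lambda(k,k)=\K\,\rho_{(1^k)}$ is one-dimensional, any candidate unitor is a nonzero scalar $c_k$ times the identity, and naturality would force $c_{n'}P_{m',n'}=c_{m'}\rho_\beta$ for every $\beta$, which is impossible; so no choice of unitors is natural. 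The Karoubi route fails for a different reason: transporting $\tilde\otimes$ along the retraction of $(m+m',e_{(1^m)}\otimes e_{(1^{m'})})$ onto $(m+m',e_{(1^{m+m'})})$ produces (up to sign) the concatenation product $\rho_\lambda\odot'\rho_\mu=\pm\rho_{\proj(f\times g)}$, not $P_{m+m',n+n'}$, and that product violates the interchange law precisely because $e_{(1^m)}\otimes e_{(1^{m'})}\neq e_{(1^{m+m'})}$ --- the very obstruction the paper cites as the reason $\Lambda\E$ inherits no prop structure. The resolution is that the theorem must be read against the paper's own operational Definition \ref{D:prop}, whose ``in other words'' formulation asks only for the vertical and horizontal compositions and the symmetries $s_{m,m'}$ with their compatibility (plus preservation of identities, which holds: $\rho_{(1^m)}\odot\rho_{(1^{m'})}=\rho_{(1^{m+m'})}$); strict naturality of a unitor at the object $0$ is not among the verified axioms and genuinely fails. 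So do not sink effort into ``pinning down the unitors'' --- there is nothing to pin down --- and the rest of your argument already covers everything the paper proves.
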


\begin{proof} 
It is immediate to prove that the product $\odot$ is associative. Let us prove the compatibility between the composition $\ast$ and the monoidal product $\odot$. Let us consider 
\[\lambda \in\Part(m,n), \ \lambda_2\in\Part(m_2,n_2), \ \beta\in\Part(l,m), \ \mbox{and} \ \beta_2 \in\Part(l_2,m_2) \ . \] 
On the one hand, we have \[  (\rho_{\lambda }   \odot\rho_{\lambda_2})\tb(\rho_\beta\odot\rho_{\beta_2}) = (-1)^{\mathrm{d}(\rho_{\lambda})n_2+\mathrm{d}(\rho_{\beta})m_2}P_{l+l_2,n+n_2} \ , \]
	and on the other hand, we have 
\[ 	(\rho_{\lambda }\tb\rho_\beta)\odot (\rho_{\lambda_2}\tb\rho_{\beta_2})  =\sum_{\mathclap{\substack{ \alpha\in\Part(l,n)\\ \alpha_2\in\Part(l_2,n_2)}}}c_\alpha^{\lambda ,\beta}c_{\alpha_2}^{\lambda_2,\beta_2} \rho_\alpha\odot \rho_{\alpha_2}  = 
		(-1)^{(l-n)n_2}(\,\sum_{\mathclap{\substack{ \alpha\in\Part(l,n)\\ \alpha_2\in\Part(l_2,n_2)}}}c_\alpha^{\lambda ,\beta}c_{\alpha_2}^{\lambda_2,\beta_2}\,) P_{l+l_2,n+n_2} \ .  \] By Theorem \ref{T:compLambdaLC}, we obtain that \[ (\rho_{\lambda }   \odot\rho_{\lambda_2})\tb(\rho_\beta\odot\rho_{\beta_2}) =  (-1)^{\mathrm{d}(\rho_{\beta})\mathrm{d}(\rho_{\lambda_2})}(\rho_{\lambda }\tb\rho_\beta)\odot (\rho_{\lambda_2}\tb\rho_{\beta_2}) \ .   \]


\noindent Finally, let us set \[s_{m,m'} \coloneqq (-1)^{mm'}\rho_{(1^{m+m'})} \in \E_\Lambda(m+m',m+m') \ . \]  We have
 \[(-1)^{\mathrm{d}(\rho_{\alpha})\mathrm{d}(\rho_{\beta})} \left(\rho_\beta\odot \rho_\alpha \right) \tb s_{m,m'} = s_{n,n'} \tb \left(\rho_\alpha\odot \rho_\beta\right)=(-1)^{mn'}P_{m+m',n+n'} \ . \qedhere \] 
\end{proof}

\begin{remark}
The underlying operad associated to the prop $\E_\Lambda$ is isomorphic to the operad $\SCom$ via the isomorphism given by \[\rho_{(m)}\in\E_\Lambda(m,1) \longmapsto (-1)^{\frac{m(m-1)}{2}}\mu_m \ . \] For dimension reasons, the prop $\E_\Lambda$ is nevertheless
 not isomorphic to the prop freely generated by the operad $s \Com$, and thus to $\mathcal{E}$. Let us note that the prop $\E_\Lambda$ is also not finitely generated since any family of generators would necessary contain a generator in each $\E^1_\Lambda(m,m-1)$. 
\end{remark}

\noindent Additional computations suggest the following conjecture.

\begin{conjecture}\label{conj} The prop structure given by Theorem \ref{T:main} is the unique prop structure on the category $\E_\Lambda^\bullet$ which is the sum of integers on objects and such that for all $m\geqslant  0$, 
\[ \rho_{(1)}\odot \rho_{(1^m)}=\rho_{(1^{m+1})} \quad \mbox{and} \quad 
 \rho_{(1)}\odot\rho_{(2,1^m)}=\rho_{(2,1^{m+1})} \ .\]
\end{conjecture}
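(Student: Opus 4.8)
The plan is to fix an arbitrary prop structure $\odot'$ on the category $(\E_\Lambda,\tb)$ that agrees with the sum of integers on objects and satisfies the two normalization identities, and then to show that the prop axioms force $\rho_\alpha\odot'\rho_\beta$ to equal the product $\odot$ of Theorem \ref{T:main} for every pair of partitions. Several reductions are purely formal. Since $\odot'$ is a bifunctor that is addition on objects, it preserves identities, so $\rho_{(1^a)}\odot'\rho_{(1^b)}=\rho_{(1^{a+b})}$ and the first normalization identity is automatic. Because $\E_\Lambda^0(N,N)=\K\rho_{(1^N)}$ is one-dimensional, every swap $s_{m,m'}$ is forced to be a scalar multiple of $\rho_{(1^{m+m'})}$; the symmetry and hexagon axioms then collapse to a commutativity-up-to-scalar relation between $\rho_\alpha\odot'\rho_\beta$ and $\rho_\beta\odot'\rho_\alpha$, while the graded interchange law takes the form $(f\odot' f')\tb(g\odot' g')=(-1)^{\mathrm{d}(f')\mathrm{d}(g)}(f\tb g)\odot'(f'\tb g')$ exactly as in the proof of Theorem \ref{T:main}.

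First I would pin $\odot'$ down in low degree. The second normalization identity gives $\rho_{(1)}\odot'\rho_{(2,1^k)}=\rho_{(2,1^{k+1})}$; iterating it through the associativity of $\odot'$ yields $\rho_{(1^a)}\odot'\rho_{(2,1^k)}=\rho_{(2,1^{a+k})}$, and the commutativity relation then determines the right-padded products $\rho_{(2,1^k)}\odot'\rho_{(1^b)}$ as well. Since $\E_\Lambda^1(m,m-1)=\K\rho_{(2,1^{m-2})}=\K P_{m,m-1}$ is one-dimensional, this fixes $\odot'$ on every degree-one generator padded by identities, and by the interchange law it therefore fixes $\odot'$ on any pair of morphisms expressible as $\tb$-composites of identities and the generators $\rho_{(2,1^k)}$.

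The heart of the matter — and the reason the statement is posed as a conjecture — is to propagate this information into degree $\ge 2$. The obstacle is that a generic partition is indecomposable for the composition $\tb$: by Theorem \ref{T:compLambdaLC} every composite $\rho_\gamma\tb\rho_\delta$ is a weighted average of basis elements, so the image of $\tb$-composition from lower ranks is a \emph{proper} subspace of $\E_\Lambda(m,n)$ (for instance for $(m,n)=(4,2)$ it is the single line $\K P_{4,2}$, which does not contain $\rho_{(2,2)}$). Consequently the interchange law alone cannot reach a product such as $\rho_{(1)}\odot'\rho_{(2,2)}$, and indeed the normalization hypotheses constrain $\rho_{(1)}\odot'\rho_\beta$ only when the target $\E_\Lambda(m'+1,n'+1)$ is one-dimensional, that is only when $\mathrm{d}(\rho_\beta)\le 1$.

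To overcome this I would proceed by induction on $\mathrm{d}(\rho_\alpha)+\mathrm{d}(\rho_\beta)$, treating the finitely many coefficients in an expansion $\rho_\alpha\odot'\rho_\beta=\sum_{\lambda}c_\lambda\,\rho_\lambda$ as unknowns and harvesting linear relations among them: from associativity and the commutativity relation for $\odot'$, from the interchange law applied whenever one of the two arguments does decompose, and above all from pre- and post-composing with the already-determined maps $\rho_{(N)}\in\E_\Lambda(N,1)$, whose composites land in the one-dimensional spaces $\E_\Lambda(\,\cdot\,,1)$ where every value is forced. The content of the conjecture is that this linear system admits the product of Theorem \ref{T:main} as its unique solution. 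I expect the main difficulty to be showing that the system is non-degenerate — concretely, that the matrix coefficients obtained by composing $\rho_\alpha\odot'\rho_\beta$ with suitable maps $\rho_{(N)}$ (and their pre-composites) separate the basis $\{\rho_\lambda\}_{\lambda\in\Part(m+m',n+n')}$. This is a genuine combinatorial rigidity statement about the composition $\tb$, and it is the new input that any complete proof of the conjecture must supply.
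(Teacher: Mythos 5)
First, the statement you are addressing is stated in the paper as Conjecture \ref{conj}, supported only by ``additional computations''; the paper contains no proof, so there is nothing to compare your argument against, and your write-up is accordingly best judged as a proof plan — you are candid that it is incomplete. The formal reductions are sound: the first normalization identity is indeed automatic from functoriality of the monoidal product (identities go to identities), associativity propagates $\rho_{(1)}\odot'\rho_{(2,1^k)}=\rho_{(2,1^{k+1})}$ to $\rho_{(1^a)}\odot'\rho_{(2,1^k)}=\rho_{(2,1^{a+k})}$, and the swaps are forced to be scalars $c_{m,m'}\rho_{(1^{m+m'})}$ (though note those scalars are themselves unknowns, constrained by involutivity, the hexagons and naturality; pinning them down to $(-1)^{mm'}$ already requires knowing that some product such as $\rho_{(2)}\odot'\rho_{(2)}$ is nonzero). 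Your identification of the real obstruction — that a generic $\rho_\lambda$ is not a $\tb$-composite from intermediate ranks, e.g.\ the image of $\E_\Lambda(3,2)\otimes\E_\Lambda(4,3)$ under $\tb$ is the line $\K P_{4,2}=\K\cdot\tfrac13\left(2\rho_{(3,1)}+\rho_{(2,2)}\right)$, which misses $\rho_{(2,2)}$ — is correct and is exactly why the statement is posed as a conjecture.

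However, the one concrete mechanism you propose for closing the gap would fail. You suggest separating the unknown coefficients in $\rho_\alpha\odot'\rho_\beta=\sum_\lambda c_\lambda\rho_\lambda$ by composing into the one-dimensional spaces $\E_\Lambda(\,\cdot\,,1)$ via the maps $\rho_{(N)}$. But Theorem \ref{T:compLambdaLC} says every composite $\rho_\gamma\tb\rho_\delta$ is a weighted average of basis elements with total weight $1$; hence $\rho_{(n)}\tb\rho_\lambda=\rho_{(M)}$ for \emph{every} $\lambda\in\Part(M,n)$, and more generally any linear functional obtained by post-composing into $\E_\Lambda(\,\cdot\,,1)$ (with arbitrary further pre-composition) sends $\sum_\lambda c_\lambda\rho_\lambda$ to $\left(\sum_\lambda c_\lambda\right)\rho_{(L)}$. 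Such functionals detect only the total weight, so they cannot distinguish, say, $\rho_{(2,2)}$ from $P_{4,2}$, and your proposed linear system degenerates exactly on the indecomposable classes where new information is needed. A proof of the conjecture would have to extract constraints that see more than total weights — for instance compositions landing in higher-dimensional spaces $\E_\Lambda(\,\cdot\,,k)$ with $k\geqslant 2$, or a direct rigidity argument showing the affine space of admissible products is a single point — and neither your proposal nor the paper supplies this; the conjecture remains open.
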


\section{\textcolor{bordeau}{Relation with functor homology on free groups}} \label{Ext}

This work is motivated by the relation between the prop freely generated by the operadic suspension of $\Com$ and the extension groups between the tensor powers of the abelianisation functor obtained in \cite{vespa15}. More precisely, let $\textbf{gr}$ be the category of finitely generated free groups, $\textbf{ab}$ the category of finitely generated free abelian groups and $\mathrm{Vect}_\K$ the category of $\K$-vector spaces. We denote by $\F \left(\textbf{gr} \right)$ the category of functors from $\textbf{gr}$ to $\mathrm{Vect}_\K$. Let $\mathfrak{a} : \textbf{gr} \longrightarrow \textbf{ab}$ be the abelianization functor and let $T^n : \mathrm{Vect}_\K \longrightarrow \mathrm{Vect}_\K$ be the $n^{th}$ tensor product functor. Let us consider the functor \[\mathfrak{a}_{\K} \coloneqq \mathfrak{a} \underset{\mathbb{Z}}{\otimes} \K :\textbf{gr} \longrightarrow \mathrm{Vect}_\K \ . \] 
 We consider the category $\E_{T}$ enriched in graded $\K$-vector spaces whose objects are the integers $\mathbb{N}$ and whose morphisms are given by \[\E^{\bullet}_T(m,n) \coloneqq \Ext^{\bullet}_{\F \left(\textbf{gr}\right)} \left(T^n \circ \abK ,   T^m \circ \abK\right) \] with the composition given by the Yoneda product. It was shown in \cite{vespa15} that, together with the external product of extensions, this category forms a prop which is freely generated by its underlying operad. This operad is identified in \cite[Section~9]{KV23} as being the suspension of $\Com$. The authors introduce the generator \[[\pi^{\otimes n}] \in \E_{T}^{n-1}(n,1)  \] and they define an isomorphism of props $ \iota :  \E_{T} \to \E$ given by \[[\pi^{\otimes p_1} ]\otimes \ldots \otimes [\pi^{\otimes p_n} ] \in \E_{T}(m,n) \longmapsto \mu_{p_1} \otimes \ldots \otimes \mu_{p_n} \ . \] 
By Section \ref{E} we have an isomorphism $\E \cong \E$ given for every surjection $f \in \Surj(m,n)$  by,
\[\begin{array}{cccc}
    \E_f^{m-n} & \longrightarrow  & \E_f^{m-n} \\
    \mu_{p_1}\otimes\ldots\otimes \mu_{p_n}& \longmapsto  & \nu_f \  \qquad & 
    \ .
\end{array} \] The composition of $\iota$ with the previous isomorphism leads to a natural equivalence: 
\begin{equation*} 
 \Kar(\E_{T}) \overset{\sim}{\longrightarrow} \Kar(\E) \ .
\end{equation*}  Let $\Lambda^n : \mathrm{Vect}_\K\longrightarrow \mathrm{Vect}_\K$ be the $n^{th}$ exterior power functor. We consider the category $\Lambda \E_T$ whose objects are the integers and whose morphisms are given by
 \[\Lambda \E_T(m,n) = \Ext^{\bullet}_{\F \left(\textbf{gr} \right)}\left( \Lambda^n \circ \abK, 
 \Lambda^m \circ \abK \right) \ , \] with the composition given by the Yoneda product. The prop isomorphism $\E_{T} \simeq  \E$ induces isomorphisms \[\Lambda \E_T \cong \Lambda \E \cong \E_\Lambda \ .\] Thus, we recover the following isomorphism established in \cite[Theorem~4.2]{vespa15}
 \[\Ext^\bullet_{\Fgr}(\Lambda^n\circ\abK,\Lambda^m\circ \abK) \cong \left\{
\begin{array}{ll}
\K [ \Part(m,n) ] & \mbox{if } \bullet = m-n \\
0 &  \mbox{otherwise } , 
\end{array}
\right. \] and $\Lambda \E_T$ inherits a prop structure from that of $\E_{\Lambda}$. The Yoneda product of $\Lambda \E_T$ coincides with the composition and Theorem \ref{T:compLambdaLC} thus gives an explicit formula. 

\begin{remark}
   We could hope that the monoidal product of $\Lambda \E_T$ corresponds to an external product defined on extensions of exterior powers functors, as it was the case for $\E$. However, this is not the case: exploiting the Hopf structure of the functor $\Lambda$, one can actually define an external product on $\Lambda \E_T$ but it is not compatible with the Yoneda product.
\end{remark}


\noindent Recall that the regular representation of the symmetric group $\Ss_d$ decomposes as 
\begin{equation*} 
\K [\Ss_d] \cong \bigoplus_{\lambda \vdash d} S_\lambda^{\oplus \mathrm{dim} (S_\lambda)},
\end{equation*}
where $S_\lambda$ is the simple module indexed by the partition $\lambda$ of $d$. Moreover, a simple $\K[\mathbb{S}_d]$-module is isomorphic to $S_\lambda$ for a unique partition $\lambda$ of $ d$. We deduce from the previous decomposition that
\begin{equation*} \label{E:decompT}
T^d \circ \abK \cong \bigoplus_{\lambda \vdash d} \mathbf{S}_\lambda^{\oplus \dim (S_\lambda)}
\end{equation*}
where \[\mathbf{S}_\lambda=(T^d \circ \abK)\underset{\mathbb{S}_d}{\otimes} S_\lambda \ \] is a simple functor. For $\lambda$ a partition of $m$ and $\mu$ a partition of $n$ we have:
\[\Kar(\E_{T})((m,e_\lambda),(n,e_\mu))=e_\mu \cdot \Ext^{\bullet}_{\F \left(\textbf{gr}\right)} 
\left(T^n\circ\abK,T^m\circ\abK\right) \cdot e_\lambda = \Ext^{\bullet}_{\F \left(\textbf{gr} \right)} \left(\mathbf{S}_\mu ,   \mathbf{S}_\lambda\right) \ , \] and thus
\begin{equation}\label{E:main_idempotent}
\Ext^{\bullet}_{\F \left(\textbf{gr} \right)} \left(\mathbf{S}_\mu ,   \mathbf{S}_\lambda\right) \simeq e_\mu \cdot \E(m,n) \cdot e_\lambda \ .
\end{equation} Using Formula (\ref{E:main_idempotent}) and the generators $\nu_f$ introduced in Notation \ref{nu}, one can compute these extension groups. We conclude by giving some examples below.

\begin{example}\label{exterior powers}
For all $n$ and all $m$, we have \[\Lambda \E_T(m,n)  = 
 \Ext^{\bullet}_{\F \left(\textbf{gr} \right)} \left(\mathbf{S}_{(1^n)} ,   \mathbf{S}_{(1^m)}\right) \ . \]  For example,  we have
 \[\Ext^1_{\F \left(\textbf{gr}\right)} \left(\mathbf{S}_{(1^2)} ,   \mathbf{S}_{(1^3)} 
 \right) \simeq e_{(1^2)} \cdot \E(3,2) \cdot e_{(1^3)},\ \text{ with }\] 
 \[e_{(1^2)}=\dfrac{1}{2}(1-\tau_{1,2})\ \text{ and }\ e_{(1^3)}=\dfrac{1}{6} \sum_{\sigma \in \mathbb{S}_3}\epsilon(\sigma)\sigma\ .\]
By Theorem \ref{P:easyprop}, any surjection $f\in \Surj(3,2)$ satisfies
    \[e_{(1^2)} \cdot \nu_f \cdot e_{(1^3)}=\dfrac{1}{6} \underset{g \in \mathrm{Surj}(3,2)}{\sum} \nu_g\ , \]
    confirming that $\Ext^1_{\F \left(\textbf{gr} \right)} \left(\mathbf{S}_{(1^2)} ,   \mathbf{S}_{(1^3)} \right) $ is $1$-dimensional.
\end{example}

\begin{example} We recover the following result of \cite[Theorem 4.2]{vespa15} :
\[\Ext^{\bullet}_{\F \left(\textbf{gr} \right)} \left(T^n \circ \abK,   S^m \circ \abK\right)=\begin{cases} 
\K & \text{ if } n=m \text{ and } \bullet=0,\\
0 & \text {else}.\end{cases}\]
We already know that the graded vector space is concentrated in degree $m-n$. For $m=n$,
\[\Ext^{0}_{\F \left(\textbf{gr}\right)} \left(T^m \circ \abK,   S^m \circ \abK\right) \]
is one dimensional, generated by the image of the idempotent $e_{(m)}$ of $\Ss_m$ via the isomorphism $\K[\Ss_m]\simeq \E(m,m).$
If $m\not=n$, we can assume $m>n$. Working in the Karoubi envelope of $\E_T$, we have
    \[\Kar(\E_{T}) ((m,e_{(m)}), (n,id))=\Ext^{\bullet}_{\F \left(\textbf{gr} \right)} \left(T^n \circ \abK,   S^m \circ \abK\right)\simeq\E(m,n)\cdot e_{(m)}.\]
  Let $f\in\Surj(m,n)$ be a surjection which decomposes as $s\circ\alpha$ with $s\in\OSurj(m,n)$. Since $\nu_f=\epsilon(\alpha)\nu_s\cdot\alpha$, it is enough to prove that $\nu_s\cdot e_{(m)}=0.$ Let us denote $p_i=|s^{-1}(i)|$. Any
   $\sigma\in\Ss_m$ writes uniquely as $\sigma=(\sigma_1\times\ldots\times\sigma_n)\circ u$ with $\sigma_i\in\Ss_i$ and $u\in\Sh_s$, so that $s\circ\sigma=s\circ u$.  Hence \[\nu_s\cdot\sigma=\epsilon(\sigma_1)\ldots\epsilon(\sigma_n)\epsilon(u)\nu_{s\circ u} \ .\]
 If $m>n$, then there exists $i$ such that $p_i>1$. In particular, in $\Ss_i$,  there are  as many odd permutations as even permutations, so that $\nu_s\cdot e_{(m)}=0$.
\end{example}

\begin{example} With the help of Formula (\ref{E:main_idempotent}), let us compute
     \[\ \Ext^1_{\F \left(\textbf{gr} \right)} \left(\mathbf{S}_{(1^2)} , \mathbf{S}_{(2,1)}  \right) \quad \mbox{and} \quad  \ \Ext^1_{\F \left(\textbf{gr} \right)} \left(\mathbf{S}_{(2)} , \mathbf{S}_{(2,1)}  \right) \ . \]
     Following \cite[Section4]{FultonHarris}, we have
     
    \[e_{(1^2)}=\dfrac{1}{2}(1-\tau_{1,2}),\ e_{(2)}=\dfrac{1}{2}(1+\tau_{1,2}) \text{ and } e_{(2,1)}=\dfrac{1}{3} (1-\tau_{1,3}+\tau_{1,2}-(132))\] 
    where $(132)$ is the cyclic permutation. Using Theorem \ref{P:easyprop}, we obtain that 
    $\Ext^1_{\F \left(\textbf{gr} \right)} \left(\mathbf{S}_{(1^2)} , \mathbf{S}_{(2,1)}  \right)$ is $1$-dimensional, generated by
    $e_{(1^2)} \cdot \nu_f \cdot e_{(2,1)}$ with $f\in\Surj(3,2)$ defined by $f(1)=1$ and $f(2)=f(3)=2$. Similarly, we obtain that     $\Ext^1_{\F \left(\textbf{gr} \right)} \left(\mathbf{S}_{(2)} , \mathbf{S}_{(2,1)}  \right)$ is $1$-dimensional, generated by
    $e_{(2)} \cdot \nu_f \cdot e_{(2,1)}$ with $f\in\Surj(3,2)$ defined by $f(1)=1$ and $f(2)=f(3)=2$.
  Indeed, one can check that \[\nu_f\cdot e_{(2,1)}=\frac{1}{3}( \nu_{f}+\nu_{f\circ \tau_{1,3}}-2\nu_{f\circ\tau_{1,2}}) \ . \]

\end{example}

\bibliographystyle{alpha}
\bibliography{EHLVZ}

\begin{thebibliography}{HPV13}

\bibitem[Dja19]{Dja19}
Aurélien Djament.
\newblock Décomposition de hodge pour l’homologie stable des groupes
  d’automorphismes des groupes libres.
\newblock {\em Compos. Math.}, 155(9):1794--1844, 2019.

\bibitem[Dri83]{Dr1}
V.~Drinfeld.
\newblock Hamiltonian structures on lie groups, lie bialgebras and the
  geometric meaning of classical yang-baxter equations.
\newblock {\em Soviet Math. Dokl.}, 27(1):68--71, 1983.

\bibitem[Dri86]{Dr2}
V.~Drinfeld.
\newblock Quantum groups.
\newblock {\em Proceedings of the {I}nternational {C}ongress of
  {M}athematicians}, 1, 1986.

\bibitem[FH91]{FultonHarris}
William Fulton and Joe Harris.
\newblock {\em Representation theory}, volume 129 of {\em Graduate Texts in
  Mathematics}.
\newblock Springer-Verlag, New York, 1991.
\newblock A first course, Readings in Mathematics.

\bibitem[HPV13]{HPV}
M.~Hartl, T.~Pirashvili, and C.~Vespa.
\newblock {Polynomial Functors from Algebras over a Set-Operad and Nonlinear
  {M}ackey Functors}.
\newblock {\em International Mathematics Research Notices}, 2015(6):1461--1554,
  2013.

\bibitem[HR15]{HR15}
Philip Hackney and Marcy Robertson.
\newblock On the category of props.
\newblock {\em Appl. Categ. Structures}, 23(4):543--573, 2015.

\bibitem[Kar68]{Kar68}
Max Karoubi.
\newblock Algèbres de {C}lifford et {K}-théorie.
\newblock {\em Ann. Sci. Ecole Norm. Sup.}, 1(4):161--270, 1968.

\bibitem[KV23]{KV23}
Nariya Kawazumi and Christine Vespa.
\newblock On the wheeled {PROP} of stable cohomology of {${\rm Aut}(F_n)$} with
  bivariant coefficients.
\newblock {\em Algebr. Geom. Topol.}, 23(7):3089--3128, 2023.

\bibitem[LV12]{LV12}
J.-L. Loday and B.~Vallette.
\newblock {\em Algebraic operads}, volume 346 of {\em Grundlehren der
  Mathematischen Wissenschaften}.
\newblock Springer, 2012.

\bibitem[Mar08]{Mar08}
Martin Markl.
\newblock Operads and props, handbook of algebra.
\newblock {\em Elsevier/North-Holland, Amsterdam}, 5:87--140, 2008.

\bibitem[Mit72]{Mitchell72}
Barry Mitchell.
\newblock The dominion of {I}sbell.
\newblock {\em Trans. Amer. Math. Soc.}, 167:319--331, 1972.

\bibitem[ML65]{ML65}
S.~Mac~Lane.
\newblock Categorical algebra.
\newblock {\em Bull. Amer. Math. Soc.}, 71:40--106, 1965.

\bibitem[Ves18]{vespa15}
Christine Vespa.
\newblock Extensions between functors from free groups.
\newblock {\em Bulletin of the London Mathematical Society}, 50(3):401--419,
  2018.

\end{thebibliography}

\end{document}